\DeclareMathOperator{\bl}{bl}
\newtheorem{theorem}{Theorem}
\newtheorem{lemma}[theorem]{Lemma}
\newtheorem{proposition}[theorem]{Proposition}
\newtheorem{corollary}[theorem]{Corollary}
\newtheorem{definition}[theorem]{Definition}
\newtheorem{conjecture}[theorem]{Conjecture}
\theoremstyle{remark}
\newtheorem{remark}{Remark}[theorem]
\newenvironment{example}{\noindent \textbf{Example}.}{\hfill $\square$}
\newenvironment{proofof}[1]{\noindent \textit{Proof of #1}.}{\hfill $\square$}
\newcommand{\NS}{\mathop{\rm NS}\nolimits}
\newcommand{\Num}{\mathop{\rm Num}\nolimits}
\newcommand{\Stab}{\mathop{\rm Stab}\nolimits}
\newcommand{\Coh}{\mathop{\rm Coh}\nolimits}
\newcommand{\Hom}{\mathop{\rm Hom}\nolimits}
\newcommand{\Amp}{\mathop{\rm Amp}\nolimits}
\numberwithin{equation}{section}
\newcommand{\Hilb}{\rm Hilb}
\newcommand{\rk}{\rm rk}
\def\Ext(#1,#2,#3){\mbox{Ext}^{#1}(#2,#3)}
\def\hom(#1,#2){\mbox{hom}(#1,#2)}
\def\ext(#1,#2,#3){\mbox{ext}^{#1}(#2,#3)}
\def\Z{\mathbb{Z}}
\def\R{\mathbb{R}}
\def\Q{\mathbb{Q}}
\def\C{\mathbb{C}}
\def\P{\mathbb{P}}
\def\T{\mathbb{T}}
\def\a'{\alpha}
\def\t'{\beta}
\def\g'{\gamma}
\def\l'{\lambda}
\def\pin(#1,#2){\left\langle #1,#2 \right\rangle}
\def\pti(#1,#2,#3){\left\langle #1,#2,#3 \right\rangle}
\def\suma(#1,#2,#3){\sum\limits_{#1=#2}^#3}
\begin{document}

\markboth{Cristian Martinez}
{Duality, Bridgeland wall-crossing and flips of secant varieties}


\title{DUALITY, BRIDGELAND WALL-CROSSING AND FLIPS OF SECANT VARIETIES}

\author{CRISTIAN MARTINEZ}

\address{Department of Mathematics, University of California, Santa Barbara\\
South Hall, Room 6607 Santa Barbara, CA 93106-3080}

\email{martinez@math.ucsb.edu}
\urladdr{http://web.math.ucsb.edu/~martinez/}



\begin{abstract}

Let $v_d(\mathbb{P}^2)\subset |\mathcal{O}_{\mathbb{P}^2}(d)|$ denote the $d$-uple Veronese surface. After studying some general aspects of the wall-crossing phenomena for stability conditions on surfaces, we are able to describe a sequence of flips of the secant varieties of $v_d(\mathbb{P}^2)$ by embedding the blow-up $\bl_{v_d(\mathbb{P}^2)}|\mathcal{O}_{\mathbb{P}^2}(d)|$ into a suitable moduli space of Bridgeland semistable objects on $\P^2$.
\end{abstract}

\keywords{Stability conditions; wall-crossing; moduli of torsion sheaves; secant varieties.}

\subjclass[2010]{14D20; 14E30; 18E30}

\maketitle

\setcounter{tocdepth}{1}
\tableofcontents

\section{Introduction}\label{intro}
Stability conditions on triangulated categories were introduced by Bridgeland \cite{Bstab}, who also constructed the first family of nontrivial examples for $K3$ surfaces \cite{B2}. In \cite{AB}, Arcara and Bertram extended these examples to an arbitrary smooth projective surface. \cite{AB} also studies the moduli spaces of Bridgeland semistable objects of a particular topological type on a simple K3 surface $(S,H)$. By analyzing how these moduli spaces change as the stability condition varies, the authors are able to describe a sequence of birational transformations of the blow-up of the complete linear series $|H|$ along $S$, flipping the secant varieties of $S$.


One of the features of these stability conditions is their ``well behaved'' wall-crossing. This phenomenon was studied in \cite{ABCH} for the topological type $v=(1,0,-n)$ on $\P^2$, where it was indicated that varying the family of stability conditions introduced in \cite{AB} corresponds to running a directed Minimal Model Program (MMP) on the Hilbert scheme of $n$ points (regarded as the moduli space of Gieseker semistable sheaves of type $v$). The same statement was proven in \cite{BMW} for any primitive topological type. The correspondence between wall-crossing for Bridgeland stability conditions and MMP wall-crossing has been extensively studied by Coskun, Huizenga, and Wolf \cite{CH14,CHW14} to compute the nef and effective cones of moduli spaces of Gieseker semistable plane sheaves.


In \cite{M5} and \cite{M6}, Maican constructs cohomological stratifications for the moduli spaces $N_{\P^2}(r,\chi)$ of Gieseker semistable sheaves on $\P^2$ with Hilbert polynomial $rm+\chi$ ($r=5,6$). Using those strata we can get exceptional loci for birational transformations of $N_{\P^2}(r,\chi)$, as it was done in \cite{BMW} for the moduli spaces $N(4,2)$ and $N(5,0)$. However, there is no bijective correspondence between the cohomological strata and the Bridgeland walls. It was shown in \cite{CHUNG13} that for the case of $N(6,1)$, a cohomological strata may be object of several contractions when running the MMP, giving rise to several Bridgeland walls. 

Nevertheless, when $\chi=0$ we can identify all rank-1 walls even when Maican-type stratifications are unknown. In this case, by restricting the Bridgeland wall-crossing on a suitable subvariety of a model of $N(d,0)$ ($d$ odd), and following the spirit of \cite{AB}, we construct a sequence of flips for the blow-up of the linear series $|\mathcal{O}(d-3)|$ along the Veronese surface, with the first of these flips coinciding with the one constructed by Vermeire in \cite{V1}.\\

\noindent \textbf{Theorem \ref{main3}.} \emph{Let $d\geq 5$ be an integer and let $\nu_{d-3}\colon \P^2\rightarrow \P(H^0(\mathcal{O}(d-3)))^{\vee}=\P^N$ be $(d-3)$-uple embedding. There exists a sequence of flips 
\footnotesize{
$$
\begin{diagram}
\node{\bl_{\nu_{d-3}(\P^2)}\P^N}\arrow{se}\arrow{s}\arrow[2]{e,t,..}{f_1}\node{}\node{M_1}\arrow[2]{e,t,..}{f_2}\arrow{sw}\arrow{se}\node{}\node{M_2}\arrow{sw}\arrow{se}\node{\cdots}\node{M_{k}}\arrow{sw}\\
\node{N_1\supset\P^N\hspace{1cm}}\node{M_1'}\node{}\node{M_2'}\node{}\node{\cdots}\node{}
\end{diagram} 
$$}
\normalsize
where $k=(d-1)/2$ for $d$ odd, and $k=\lfloor{(d-2)/4\rfloor}$ for $d$ even, the exceptional locus of $f_i$ is the strict transform of $Sec^{i}(\nu_{d-3}(\P^2))$, and $N_1$ is the first birational model appearing when running the MMP for $N(d,0)$ or $N(d,d/2)$ depending on whether $d$ is odd or even respectively.}\\

To obtain this sequence of flips we need to understand the flipping locus for the flips appearing when running the MMP for the Gieseker moduli spaces $N(d,0)$ for $d$ odd, and $N(d,d/2)$ for $d$ even. To do this, we will need a generalization of a result of Maican.
  
In \cite{MDUAL}, Maican proves that the map $\mathcal{F}\mapsto \mathscr{E}xt^{n-1}(\mathcal{F},\omega_{\P^n})$ induces an isomorphism between the moduli spaces $N_{\P^n}(r,\chi)$ and $N_{\P^n}(r,-\chi)$ of Gieseker semistable sheaves with Hilbert polynomials $P=rm+\chi$ and $P^D=rm-\chi$ respectively. The moduli spaces $N_X(r,\chi)$ were constructed by Simpson \cite{Simp} for any smooth projective surface via invariant theory and they were proven to be projective, so one could ask whether Maican's result extends to any surface. This was proven by Sacc\`a in her thesis \cite{GiuliaThesis}. We recover Sacc\`a's result as a corollary of a more general statement:\\
\\
\textbf{Theorem \ref{duality}.} \emph{The functor $R\mathcal{H}om(\cdot,\omega_X)[1]$ induces an isomorphism between the Bridgeland moduli spaces $M_{D,tH}(v)$ and $M_{-D+K_X,tH}(v^D)$ provided these moduli spaces exist and $Z_{D,tH}(v)$ belongs to the open upper half plane. }
\\

\textbf{Notation.} Other than specified we will use the following standard notation:
\begin{itemize}
\item $\mathfrak{R}(z)$, $\mathfrak{I}(z)$ denote the real and imaginary parts of the complex number $z$.
\item $D^b(X)$ is the bounded derived category of coherent sheaves on $X$.
\item $K(X)$ denotes the Grothendieck group of the triangulated category $D^b(X)$.
\item For $E,F\in K(X)$ we define the operator
$$
\chi(E,F)=\sum_{i\in\Z}(-1)^i \dim \Ext(i,E,F).
$$
When $E=\mathcal{O}$, we denote $\chi(E,F)$ by $\chi(F)$ and refer to it as the Euler characteristic of $F$.
\item When $X$ is a smooth projective complex surface, the Hizerburch-Riemann-Roch Theorem can be written as
$$
\chi(F)=ch_2(F)-ch_1(F)\frac{K_X}{2}+ch_0(F)\chi(\mathcal{O}),
$$ 
where $K_X$ is the canonical divisor of $X$.
\item $\mbox{Num}(X)$ denotes the group of cycles $A(X)$ up to numerical equivalence, and $\mbox{NS}(X)=\mbox{Num}^1(X)$ the Ner\'on-Severi group of divisors up to numerical equivalence. Also, $\mbox{Num}(X)_{\Q}$ and $\mbox{Num}(X)_{\R}$ denote the tensor products $\mbox{Num}(X)\otimes \Q$ and $\mbox{Num}(X)\otimes \R$, respectively.
\item We use $\mathcal{H}^i(\cdot)$ to denote the cohomology sheaves of an object in the derived category and $H^i(\cdot)$ for the cohomology groups of a sheaf.
\item We use $n\mathcal{F}$ to denote the direct sum $\mathcal{F}^{\oplus n}$.
\item For a smooth projective surface $X$, the topological type $v\in \mbox{Num}(X)_{\Q}$ of an object $E\in D^b(X)$ is its Chern character vector.
\item $M_H(v)$ denotes the moduli space of Gieseker semistable sheaves of topological type $v$ with respect to the polarization $H\in \mbox{Pic}(X)$.
\item For a stability condition $\sigma\in \mbox{Stab}(X)$, $M_{\sigma}(v)$ denotes the moduli space parametrizing $S$-equivalence classes of $\sigma$-semistable objects of topological type $v$ (if such space exists). For the stability conditions $\sigma_{\beta,\omega}$, $M_{\sigma_{\beta,\omega}}(v)$ is denoted simply by $M_{\beta,\omega}(v)$, and by $M_{s,t}(v)$ when $\beta=sH$ and $\omega=tH$.
\item We refer to an object $F$ fitting into an exact sequence $A\hookrightarrow F \twoheadrightarrow B$ as an extension.
\end{itemize}



\section{Preliminaries}\label{prelim}
We assume familiarity with the concept of stability conditions introduced by Bridgeland \cite{Bstab}. We recall here the relevant theorems and definitions but for a more detailed presentation the unfamiliar reader is encouraged to consult Bridgeland's original papers \cite{Bstab,B2}, or the introduction to the topic by Huybrechts \cite{HSTAB}.   

Let $X$ be a smooth projective variety.
\begin{definition}A pre-stability condition on $X$ is a pair $\sigma=(Z,\mathcal{A})$ consisting of a linear function $Z:K(X)\rightarrow \C$ called the charge and the heart $\mathcal{A}$ of a bounded t-structure on $D^b(X)$, such that:
\begin{enumerate}
\item $\mathfrak{I}(Z(E))\geq 0$ for all $E\in \mathcal{A}$ and
\item If $\mathfrak{I}(Z(E))=0$ and $E\neq 0$ then $\mathfrak{R}(Z(E))<0$. 
\end{enumerate} 
\end{definition}

For every pre-stability condition one can define a slope function
$$
\mu_{\sigma}=\frac{-\mathfrak{R}(Z)}{\frak{I}(Z)}
$$
which gives us a notion of (semi)stability: an object $E\in\mathcal{A}$ is said to be $\sigma$-(semi)stable if for any inclusion $A\hookrightarrow E$ of objects in $\mathcal{A}$ one has
$$
\mu_{\sigma}(A)(\leq) < \mu_{\sigma}(E).
$$
\begin{definition}
A pre-stability condition $\sigma=(Z,\mathcal{A})$ is a stability condition if it has the Harder-Narasimhan property: 
\begin{itemize}
\item Every nonzero object $E\in\mathcal{A}$ admits a finite filtration in $\mathcal{A}$
$$
0\subset E_0\subset E_1\subset\cdots\subset E_n=E
$$
uniquely determined by the property that each quotient $F_i:=E_i/E_{i-1}$ is $\sigma$-semistable and $\mu_{\sigma}(F_1)>\mu_{\sigma}(F_2)>\cdots >\mu_{\sigma}(F_{n-1})$. 
\end{itemize}
\end{definition}
\begin{example}
If $X=C$ is a smooth projective curve then ordinary degree and rank of coherent sheaves give a stability condition on $\mathcal{A}=D^b(\mbox{Coh} (C))$:
$$
Z(\mathcal{F})=-\deg(\mathcal{F})+\sqrt{-1}\ \rk(\mathcal{F}).
$$
However, when $X$ is a surface this is not the case. One can still define a Mumford slope (with respect to some polarization $H$):
$$
\mu_H(E)=\frac{c_1(E)\cdot H}{\rk(E)},
$$
but this does not come from any stability condition on $\mbox{Coh}(X)$ since $c_1(\C_p)=\rk(\C_p)=0$. Nevertheless, it is true that every coherent sheaf $E$ has a filtration
$$
E_0\subset\cdots\subset E_n=E
$$
such that $E_0$ is the torsion subsheaf of $E$ and for every $i>0$,  the factors $E_i/E_{i-1}$ are semistable of decreasing slopes. 
\end{example}

Let $\sigma=(Z,\mathcal{A})$ be a stability condition on $X$. For any nonzero object $E\in\mathcal{A}$ one can write $Z(E)=|Z(E)|e^{\pi\sqrt{-1}\phi}$ for a unique $\phi\in(0,1]$. We say that $E$ has phase $\phi$. For every $\phi\in(0,1]$ we denote by $\mathcal{P}_{\sigma}(\phi)$ the subcategory consisting of $\sigma$-semistable objects of phase $\phi$. Inductively, one can define $\mathcal{P}_{\sigma}(\phi+1):=\mathcal{P}_{\sigma}(\phi)[1]$. For a bounded interval $I\subset \R$ we denote $\mathcal{P}_{\sigma}(I)$ the subcategory extension-generated by $\sigma$-semistable objects of phase in the interval $I$. For instance, $\mathcal{P}_{\sigma}(0,1]=\mathcal{A}$. 

One can define semistability in terms of phase just by declaring an object $E$ to be semistable if every subobject has smaller phase. This is equivalent to the definition using slopes since for an object $E\in\mathcal{A}$ of phase $\phi$ one has
$$
\mu_{\sigma}(E)=-\cot(\pi\phi).
$$ 
An easy but important consequence of the definition of stability is
\begin{proposition}[Schur's lemma] Let $\sigma=(Z,\mathcal{A})$ be a stability condition. 
\begin{enumerate}
\item If $E$ is $\sigma$-stable then $\Hom(E,E)=\C$.
\item If $A,B$ are different $\sigma$-stable objects of the same phase then $\Hom(A,B)=0$.
\item If $A\in\mathcal{P}_{\sigma}(\phi_1)$, $B\in\mathcal{P}_{\sigma}(\phi_2)$ with $\phi_1>\phi_2$ then $\Hom(A,B)=0$.
\end{enumerate}
\end{proposition}

Let $E\in\mathcal{P}_{\sigma}(\phi)$. A finite Jordan-H\"older filtration of $E$ is a chain
$$
0=E_0\subset E_1\subset\cdots\subset E_n=E
$$
such that the factors $E_i/E_{i-1}$ are stable of phase $\phi$. In general, finite Jordan-H\"older filtrations do not always exist, and even when they exist, they are not necessarily unique. However, the stable factors are always unique up to a permutation. 
\begin{definition}
A stability condition is called locally finite if there is some $\delta>0$ such that each quasi-abelian category $\mathcal{P}_{\sigma}(\phi-\delta,\phi+\delta)$ is of finite length. For a locally finite stability condition the categories 
$\mathcal{P}_{\sigma}(\phi)$ have finite length. In particular, every semistable object has a finite Jordan-H\"older filtration.
\end{definition}
\begin{definition} Let $\sigma$ be a locally finite stability condition. Two objects $A,B\in\mathcal{P}_{\sigma}(\phi)$ are called $S$-equivalent with respect to $\sigma$ if they have isomorphic Jordan-H\"older $\sigma$-stable factors (up to a reordering).
\end{definition}
\begin{definition} We say that a pre-stability condition $\sigma=(Z,\mathcal{A})$ is numerical if $Z\colon K(X)\rightarrow \C$ factors through the Chern character map $ch\colon K(X)\rightarrow \mbox{Num}(X)_{\Q}$. We continue denoting by $Z$ the corresponding homomorphism $\mbox{Num}(X)_{\Q}\rightarrow \C$.
\end{definition}
\begin{definition}
Fix a norm $\|\ \|$ on $\Num(X)_{\R}$. A numerical pre-stability condition $\sigma=(Z,\mathcal{A})$ is said to satisfy the support property if there exists a constant $C>0$ such that for all $\sigma$-semistable objects $0\neq E\in D^b(X)$, we have
$$
\| ch(E)\|\leq C|Z(E)|.
$$
\end{definition}

It follows from \cite[Proposition~B4]{BMlpp} and \cite[Lemma~4.5]{B2} that numerical stability conditions satisfying the support property are locally finite. We denote by $\Stab(X)$ the set of numerical stability conditions satisfying the support property. We can now state Bridgeland's deformation result:

\begin{theorem}[{\cite[Theorem 1.2]{Bstab}}]
There is a natural topology on $\Stab(X)$ such that the forgetful map
$$
\mathcal{Z}\colon \Stab(X)\rightarrow {\Hom}(\Num(X),\C),
$$
sending $\sigma=(Z,\mathcal{A})$ to $Z$, is a local homeomorphism. In particular, every connected component of $\Stab(X)$ is a complex manifold.
\end{theorem}
The following proposition due to Bridgeland expresses an important property of $\Stab(X)$:
\begin{proposition}[{\cite[Proposition 9.3]{B2}}]\label{bridwalls} Let $V\subset \Stab(X)$ be a connected component, let $K\subset V$ be a compact subset, and let $v\in\Num(X)_{\Q}$. Then there is a finite collection $\{W_{\gamma}\colon \gamma\in F\}$ of (real) codimension 1 submanifolds  of $V$ (not necessarily closed), such that every connected component 
$$
C\subset K\setminus \bigcup_{\gamma\in F}W_{\gamma}
$$  
has the property that if an object $E\in D^b(X)$ with $ch(E)=v$ is semistable for some stability condition in $C$, then it is semistable for all stability conditions in $C$. 
\end{proposition}
\begin{definition}
For a given topological type $v\in\Num(X)_{\Q}$, we refer to the submanifolds $W_{\gamma}$ of Proposition \ref{bridwalls} as walls of type $v$, and to every connected component $C$ as chamber of type $v$.
\end{definition}

\begin{proposition}[{\cite[Proposition 2.2.2]{BMT}}]\label{BMTprop} Given $E\in D^b(X)$, the set of $\sigma\in\Stab(X)$ for which $E$ is $\sigma$-stable is an open subset of $\Stab(X)$. Further, on the open part of every chamber in the wall and chamber decomposition of $\Stab(X)$, the Harder-Narasimhan filtration of $E$ is constant.
\end{proposition}
\begin{remark}\label{new1} There is a particular instance of Proposition \ref{BMTprop} that we will encounter throughout the paper. Assume that $\{\sigma_t=(Z_t,\mathcal{A})\}_{t\in T}$ is a continuous 1-parameter family of stability conditions with a fixed heart $\mathcal{A}$. Assume further that there is an object $E\in\mathcal{A}$ that is $\sigma_{t_1}$-stable and $\sigma_{t_2}$-unstable for some $t_1$ and $t_2$. Then the path $\{\sigma_t\}_{t\in T}$ can not be fully contained in a chamber of type $ch(E)$, and there is a value $t_0$ between $t_1$ and $t_2$ that is the intersection of a wall of type $ch(E)$ with the family $\{\sigma_t\}_{t\in T}$. By Proposition \ref{BMTprop} there is a subobject $A\hookrightarrow E$ in $\mathcal{A}$ (the first Harder-Narasimhan factor of $E$ in the nearby chamber where $E$ is unstable) such that $\mu_{\sigma_{t_0}}(A)=\mu_{\sigma_{t_0}}(E)$.    
\end{remark}
\begin{definition}\cite{AP}. Let $S$ be a scheme of finite type over $\C$. A family of objects in $\mathcal{A}$ parametrized by $S$ is an object $E\in D^b(X\times S)$ such that for every closed point $s\in S$ we have
$$
L i_s^*(E)\in \mathcal{A}.
$$
\end{definition}

From now on, $X$ is a smooth projective complex surface. We start by reviewing the examples of stability conditions constructed by Bridgeland \cite{B2} for the $K3$ case, and generalized by Arcara-Bertram \cite{AB} for arbitrary $X$. 

Fix a class $\omega\in \mbox{Amp}(X)$. One defines, for every $s\in\R$, the full subcategories of $\Coh(X)$:
\begin{itemize}
\item $\mathcal{Q}_s=\langle E\in \mbox{Coh}(X)\colon E \  \text{is}\   \mu_{\omega}\text{-semistable}\  \text{and}\  \mu_{\omega}(E)>s\rangle$,
\item $\mathcal{F}_s=\langle E\in \mbox{Coh}(X)\colon E\    \text{is}\   \mu_{\omega}\text{-semistable}\  \text{and}\  \mu_{\omega}(E)\leq s\rangle$,
\end{itemize}  
where for a collection of objects $\mathcal{B}\subset D^b(X)$, $\langle \mathcal{B}\rangle$ denotes the smallest subcategory containing $\mathcal{B}$ and that is closed under extensions. We follow the convention that a torsion sheaf is semistable of slope $+\infty$.

The subcategories $\mathcal{Q}_s,\mathcal{F}_s$ are full and $(\mathcal{Q}_s,\mathcal{F}_s)$ is a torsion pair, i.e.,
\begin{itemize}
\item $\Hom(Q,F)=0$ for all $Q\in\mathcal{Q}_s$, $F\in\mathcal{F}_s$.
\item Every coherent sheaf $E$ fits into an exact sequence
$$
0\rightarrow Q\rightarrow E\rightarrow F\rightarrow 0
$$
for some $Q\in\mathcal{Q}_s$, $F\in\mathcal{F}_s$. This short exact sequence is unique up to isomorphisms of extensions.
\end{itemize}
By general theory of torsion pairs \cite[Proposition 2.1]{HRS}, we know that the extension closure $\langle \mathcal{Q}_s,\mathcal{F}_s[1]\rangle$ is the heart of a bounded $t$-structure. More precisely, it is the full subcategory
$$
\mathcal{A}_s=\{E\in D^b(X)\colon H^{-1}(E)\in \mathcal{F}_s,\ H^0(E)\in\mathcal{Q}_s\ \text{and}\ H^{i}(E)=0,\ i\neq-1,0\}.
$$
\begin{theorem}[{\cite[Proposition~7.1]{B2}, \cite[Corollary~2.1]{AB}}]\label{stabilityfunction} 
Let $\beta,t\omega\in N^1(X)_{\Q}$ with $\omega$ ample. Then,
$$
Z_{\beta,t\omega}(E)=-\int_X e^{-\beta-\sqrt{-1}t\omega}ch(E)
$$
is the charge of a locally finite stability condition on $\mathcal{A}_{\beta\cdot \omega}$. We denote this stability condition by $\sigma_{\beta,t\omega}$.
\end{theorem}
\begin{remark}
It follows directly from the Hodge index theorem and the Bogomolov-Gieseker inequality that $\sigma_{\beta,t\omega}$ is a pre-stability condition for every $\beta,\omega\in \NS(X)_{\R}$ and $t>0$. However, producing Harder-Narasimhan filtrations for arbitrary classes is more complicated. If for every $\beta,t\omega\in \NS(X)_{\Q}$ the stability conditions $\sigma_{\beta,t\omega}$  satisfy the support property, then Bridgeland's deformation result guarantees the existence of a lift of $Z_{\beta,t\omega}\in \Hom(\mbox{Num}(X),\C)$ to $\Stab(X)$. Because for every $\beta,t\omega\in \NS(X)_{\Q}$ the skyscraper sheaves $\C_x$ are $\sigma_{\beta,t\omega}$-stable, then for arbitrary real classes $\beta,\omega\in \NS(X)_{\R}$ and $t>0$, there should be a lift of $Z_{\beta,t\omega}$ satisfying the same property (within the same chamber of type $ch(\C_x)$ of nearby rational classes). Then \cite[Lemma 10.1(d)]{B2} shows that the only possible lift of $Z_{\beta,t\omega}$ for which all the skyscraper sheaves $\C_x$ are stable is $\sigma_{\beta,t\omega}$. The proof that on an arbitrary smooth projective surface $X$, the stability conditions $\sigma_{\beta,t\omega}$ satisfy the support property is due to Toda  \cite[Proposition~3.13]{Tod}.
\end{remark}

Assume that there is a short exact sequence $0\rightarrow A\rightarrow E\rightarrow B\rightarrow 0$ in $\mathcal{A}_{\beta\cdot\omega}$ with $ch_0(A)=ch_0(E)=ch_0(B)=0$, then it follows from the definition of $Z_{\beta,t\omega}$ that $\mu_{\sigma_{\beta,t\omega}}(A)=\mu_{\sigma_{\beta,t\omega}}(E)$ if and only if
\begin{equation}\label{torsion subsheaf}
 \frac{\chi(A)}{ch_1(A)\cdot\omega}+\frac{ch_1(A)\cdot(\frac{K_X}{2}-\beta)}{ch_1(A)\cdot\omega}=\frac{\chi(E)}{ch_1(E)\cdot\omega}+\frac{ch_1(E)\cdot(\frac{K_X}{2}-\beta)}{ch_1(E)\cdot\omega}. 
\end{equation}
Because this condition is independent of $t$, then any such $E$ is not $\sigma_{\beta,t\omega}$-stable for any value of $t$. However, it is possible for $E$ to be $\sigma_{\beta,t\omega}$-semistable for some values of $t$ and $\sigma_{\beta,t\omega}$-unstable for others. This motivates the following definition:
\begin{definition} An object $E\in \mathcal{A}_{\beta\cdot\omega}$ of Chern character vector $ch(E)=(0,ch_1,ch_2)$ is said to be $\sigma_{\beta,t\omega}$-pseudo-stable if $E$ is $\sigma_{\beta,t\omega}$-semistable, and for any subobject $A\hookrightarrow E$ in $\mathcal{A}_{\beta\cdot\omega}$ we have
$$
\mu_{\sigma_{\beta,t\omega}}(A)=\mu_{\sigma_{\beta,t\omega}}(E)\ \Rightarrow\ ch_0(A)=0.
$$
\end{definition}
\subsection{Stability conditions on the projective plane}
We now concentrate in the case $X=\P^2$. In this case (Picard number 1), one can think of $ch(E)$ as a vector with numerical entries. Choosing $\omega=H$, the hyperplane class, and $\beta=sH$, the central charge takes the form
$$
Z_{s,t}(ch_0,ch_1,ch_2)=(-ch_2+ch_1s-\frac{ch_0}{2}(s^2-t^2))+\sqrt{-1}\ (ch_1-ch_0s)t.
$$
One of the most important results in \cite{ABCH} is the following:
\begin{theorem}[{\cite[Proposition~8.1]{ABCH}}]\label{quiver} There are projective coarse moduli spaces $M_{s,t}(v)$ classifying $S$-equivalence classes of families of $\sigma_{s,t}$-semistable objects in $\mathcal{A}_s$ of topological type $v$.  
\end{theorem}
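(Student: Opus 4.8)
The plan is to reduce the existence statement to King's Geometric Invariant Theory construction of moduli of semistable representations of a finite-dimensional algebra, exploiting the strong exceptional collection $\mathcal{O},\mathcal{O}(1),\mathcal{O}(2)$ on $\P^2$ and Beilinson's resolution of the diagonal, together with the wall-and-chamber decomposition of the relevant slice of $\mathrm{Stab}(\P^2)$. First I would fix the topological type $v$ and show that the family of $Z_{s,t}$-semistable objects of type $v$ in $\mathcal{A}_s$ is bounded: every $E\in\mathcal{A}_s$ sits in a triangle $\mathcal{H}^{-1}(E)[1]\to E\to\mathcal{H}^0(E)$ with $\mathcal{H}^{-1}(E)\in\mathcal{F}_s$ and $\mathcal{H}^0(E)\in\mathcal{Q}_s$, and $Z_{s,t}$-semistability of $E$ forces bounds on the Mumford slopes, hence on the discriminants, of the Harder--Narasimhan factors of these two sheaves; together with the fixed value of $\mathrm{ch}(E)$ this leaves only finitely many possible Chern characters, and classical boundedness for semistable sheaves of fixed Chern character then does it.

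The technical core is to show that, after twisting by $\mathcal{O}(k)$ for $k\gg 0$ depending on $v$ (which replaces $v$ by $v\cdot\mathrm{ch}(\mathcal{O}(k))$ and moves the heart into a convenient ``algebraic'' chamber), every $Z_{s,t}$-semistable object $E$ of type $v$ lies in a single algebraic heart inside the subcategory $\langle\mathcal{O},\mathcal{O}(1),\mathcal{O}(2)\rangle$. The tilting equivalence $R\mathrm{Hom}(\mathcal{O}\oplus\mathcal{O}(1)\oplus\mathcal{O}(2),\ \cdot\ )$ identifies $D^b(\P^2)$ with $D^b(\mathrm{mod}\,A)$, where $A$ is the Beilinson algebra (the path algebra of the quiver with three vertices and three arrows between consecutive vertices, modulo the commutation relations), and I would check that under a suitable rotation of $\mathcal{A}_s$ this equivalence carries $Z_{s,t}$-stability to King $\theta$-stability of $A$-modules of a fixed dimension vector $d=d(v)$, with $\theta$ an explicit linear functional of $(s,t)$. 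The delicate point is precisely to verify that the semistable objects land in the abelian category $\mathrm{mod}\,A$ (cohomology concentrated in a single degree), that Bridgeland subobjects correspond to $A$-submodules, and hence that the two notions of semistability, and of $S$-equivalence, agree exactly.

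Granting this, I would invoke King's theorem: for a fixed dimension vector $d$ and a rational parameter $\theta$ there is a projective scheme $M^{\theta}_A(d)$, built as a GIT quotient, which coarsely represents $S$-equivalence classes of $\theta$-semistable $A$-modules, with GIT $S$-equivalence equal to $\theta$-$S$-equivalence. Under the equivalence of hearts, the Abramovich--Polishchuk notion of a family of objects of $\mathcal{A}_s$ over a base $S$ becomes, for the bounded family in question, an $S$-flat family of $A$-modules, so the functor of $S$-equivalence classes of families of $Z_{s,t}$-semistable objects of type $v$ is corepresented by $M^{\theta}_A(d)$; transported back through the equivalence, this is the desired projective coarse moduli space $M_{s,t}(v)$.

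Finally, for stability conditions $(s,t)$ not lying in an algebraic chamber, I would use that the chosen slice of $\mathrm{Stab}(\P^2)$, restricted to $v$, carries a locally finite wall-and-chamber structure: the set of $Z_{s,t}$-semistable objects of type $v$ is constant on each chamber, and every chamber is joined to an algebraic chamber by a path meeting finitely many walls. One then either repeats the $A$-module argument in each chamber, or observes that crossing a wall replaces $M_{s,t}(v)$ by a scheme related to it through a variation of GIT quotient, hence again projective, with the large-volume end of the slice serving as a base case where the Bridgeland moduli space coincides with a twisted Gieseker moduli space, projective by Simpson's construction. The main obstacle I anticipate is the second step --- pinning down which semistable objects can occur and proving that, after the appropriate twist, they are honest $A$-modules with King stability matching Bridgeland stability on the nose.
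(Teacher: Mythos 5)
Your overall strategy is the same one the paper attributes to Arcara--Bertram--Coskun--Huizenga and sketches right after the statement: identify Bridgeland semistability with King $\theta$-semistability for the Beilinson quiver heart $\mathcal{A}(k)=\langle\mathcal{O}(k-2)[2],\mathcal{O}(k-1)[1],\mathcal{O}(k)\rangle$, and then get projective coarse moduli from King's GIT construction, with the large-volume chamber matching the (twisted) Gieseker/Simpson moduli. The divergence, and the genuine gap, is in how you propose to reach an arbitrary point $(s,t)$. The identification of $Z_{s,t}$-stability with a King stability function on $\mathcal{A}(k)$ is only valid when $(s,t)$ lies in the quiver region $(s-(k-1))^2+t^2<1$, because only there do the three shifted line bundles lie in (a rotation of) $\mathcal{A}_s$ with charges in the upper half plane, and only there do Bridgeland subobjects correspond to submodules of the Beilinson algebra. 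Your mechanism for getting there --- twisting by $\mathcal{O}(k)$, $k\gg 0$ --- cannot work: tensoring by $\mathcal{O}(k)$ is an autoequivalence that acts on this slice of the stability manifold by translating $s\mapsto s+k$ and leaving $t$ fixed, so no twist moves a point with $t\geq 1$ into any quiver region. Your fallback, ``crossing a wall replaces the moduli space by a variation of GIT quotient, hence projective,'' presupposes exactly what is missing, namely a GIT presentation of the moduli problem at and near each wall, which at this stage is only available inside the quiver regions.

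What closes this gap in the paper's (and ABCH's) argument is a geometric fact you never invoke: for fixed $v$ the potential walls in the $(s,t)$-plane are nested semicircles with centers on the $s$-axis, so every potential wall --- and likewise every chamber --- reaches down into the strip $t<1$ and hence meets one of the quiver regions; moreover the set of $\mu_{s,t}$-semistable objects of type $v$ is unchanged along a potential wall (this is the boundedness/invariance lemma the paper quotes from ABCH, Lemma 6.3, as its Lemma on bounds). Given that, an arbitrary $(s,t)$ is transported along its potential wall (or within its chamber) into a quiver region, where the moduli space is a King quiver GIT quotient for the polarization $\mathfrak{a}_{s,t}$, hence projective, and $S$-equivalence on both sides matches. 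So the correct repair of your argument is not the twist, but: (i) prove the walls are nested semicircles centered on the axis, (ii) prove invariance of the semistable locus along each potential wall (or constancy on chambers plus the fact that each chamber meets a quiver region), and then (iii) run your King/Beilinson identification only inside the quiver regions. The remaining ingredients of your write-up (boundedness, families in the sense of Abramovich--Polishchuk becoming flat families of modules, the Gieseker chamber at $t\gg 0$) are consistent with the paper's sketch.
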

The idea is to identify $\sigma_{s,t}$-stability with quiver stability. Let $k\in\Z$ and consider the extension closure
$$
\mathcal{A}(k)=\langle \mathcal{O}(k-2)[2],\mathcal{O}(k-1)[1],\mathcal{O}(k)\rangle.
$$ 
An element of $\mathcal{A}(k)$ is a complex
$$
\C^{n_0}\otimes\mathcal{O}(k-2)\rightarrow \C^{n_1}\otimes\mathcal{O}(k-1)\rightarrow \C^{n_2}\otimes\mathcal{O}(k)
$$
with dimension vector $\mathfrak{n}=(n_0,n_1,n_2)$. Let $\mathfrak{a}$ be a vector orthogonal to $\mathfrak{n}$. An object of dimension vector $\mathfrak{n}$ is said to be quiver (semi)stable with respect to $\mathfrak{a}$ if for any subcomplex in $\mathcal{A}(k)$ of dimension vector $\mathfrak{n}'$ one has $\mathfrak{n}'\cdot \mathfrak{a}(\geq)>0$.

Moduli spaces of quiver semistable complexes of fixed dimension vector with respect to a fixed $\mathfrak{a}$ have a construction via GIT given in \cite{K}.  The change from Chern classes to dimension vectors is given by the matrix
$$
B_k:=\begin{bmatrix}
\frac{k(k-1)}{2} & \frac{-(2k-1)}{2} & 1\\
k(k-2) & -(2k-2) & 2\\
\frac{(k-1)(k-2)}{2} & \frac{-(2k-3)}{2}& 1
\end{bmatrix}.
$$
Fix a Chern character $v$ so that $B_kv$ is a vector of nonnegative entries. It is shown in \cite[Proposition 7.3]{ABCH} that for every $(s,t)$ in the region
$$
(s-(k-1))^2+t^2<1,
$$ 
there exists a choice of an orthogonal vector $\mathfrak{a}_{s,t}$ such that the moduli spaces of $\sigma_{s,t}$-semistable objects of Chern character $v$ are isomorphic to moduli spaces of semistable objects in $\mathcal{A}(k)$ of dimension vector $B_kv$ with respect to $\mathfrak{a}_{s,t}$. The key point is to observe that each wall and chamber of type $v$ in the $(s,t)$-plane intersects at least one of the regions above and so, by Proposition \ref{bridwalls}, the moduli spaces $M_{s,t}(v)$ are projective and may be constructed by GIT.

Notice that in general the Gieseker moduli spaces of 1-dimensional plane sheaves are not smooth (although its singular locus has high codimension). Because torsion sheaves are objects of $\mathcal{A}_s$ then any short exact sequence of sheaves $0\rightarrow A\rightarrow E\rightarrow B\rightarrow 0$, with $E$ a torsion sheaf, would also be exact in $\mathcal{A}_s$. Moreover, it follows from equation \eqref{torsion subsheaf} that a sheaf destabilizing $E$ with respect to Gieseker stability would also destabilize $E$ with respect to Bridgeland stability. Then on any chamber of type $v=(0,ch_1,ch_2)$ in the $(s,t)$-plane one has $\sigma_{s,t}$-semistable $=$ $\sigma_{s,t}$-pseudo-stable for objects of topological type $v$. 



\section{Wall and Chamber Structure}\label{wall+chamber}
The results in this section seem to be known to the experts but we decided to include here some proofs for the sake of completeness.

In \cite{ABCH} for the case of the stability conditions $\sigma_{s,t}$ on $\P^2$, the authors describe what new objects become stable after crossing a wall. Similar reasoning can be applied to study the wall-crossing for the 1-dimensional family of stability conditions $\{\sigma_{D,tH}\}_{t>0}$ for fixed $D, H$ with $H$ ample, on an arbitrary smooth projective surface. Following Remark \ref{new1},  assume that a wall at $t=t_0$ is produced by a destabilizing sequence
$$
0\rightarrow A\rightarrow E^+\rightarrow B\rightarrow 0
$$
and assume furthermore that $A$ and $B$ are stable at the wall and $E^+$ is stable above the wall ($t>t_0$). Then the destabilizing sequence is a Jordan-H\"older filtration for the semistable object $E^+$ at $t_0$. Crossing the wall will produce semistable objects that are $S$-equivalent to $E^+$ at the wall, i.e., the new objects have $A$ and $B$ as stable factors and so they are extensions of the form 
$$
0\rightarrow B\rightarrow E^-\rightarrow A\rightarrow 0.
$$
But even more is true, 
\begin{proposition}\label{MMP1}
Assume that $\mu_{D,t_0H}(A)=\mu_{D,t_0H}(B)$ for some objects $A,B\in \mathcal{A}_{DH}$ and that there is $\epsilon>0$ such that $A$ and $B$ are $\sigma_{D,tH}$-stable with $\mu_{D,tH}(A)<\mu_{D,tH}(B)$ for $t_0\leq t<t_0+\epsilon$. If $\Ext(1,B,A)\neq0$ in $\mathcal{A}_{DH}$ then there exists $\delta>0$ such that every non-split extension
$$
0\rightarrow A\rightarrow E\rightarrow B\rightarrow 0
$$
is $\sigma_{D,tH}$-stable (or $\sigma_{D,tH}$-pseudo-stable when $ch_0(E)=0$) for all $t_0<t<t_0+\delta$.
\end{proposition}
\begin{proof}
Assume for the moment that $ch_0(E)\neq 0$. Let $0<\delta\leq \epsilon$ such that there are no walls for $E$ between $t_0$ and $t_0+\delta$ (this is possible because the walls are locally finite). It is enough to prove that there is no stable subobject $E'\hookrightarrow E$, $\sigma_{D,tH}$-destabilizing $E$ for $t_0<t<t_0+\delta$. If there were such $E'$ then at the wall $t_0$, $E'$ is semistable and $\mu_{D,t_0H}(E')=\mu_{D,t_0H}(E)$, otherwise it would destabilize $E$ at $t_0$ since in any case $\mu_{D,t_0H}(E')\geq \mu_{D,t_0H}(E)$. 

Because $B$ is stable at the wall $t_0$ and $\mu_{D,t_0H}(E')=\mu_{D,t_0H}(B)$, then the composition $E'\rightarrow E\rightarrow B$ is either surjective or zero. If it were the zero map, then we would have an inclusion $E'\hookrightarrow A$ in which case $\mu(E')<\mu(A)<\mu(E)$ above the wall $t_0$. Let $K$ be the  kernel of $E'\rightarrow B$, then there is an inclusion $K\hookrightarrow A$. Since the slopes of $K$ and $A$ are equal at $t_0$, then either $K=0$ in which case the sequence $A\rightarrow E\rightarrow B$ splits, or $K=A$ and therefore $E'=E$.

When $ch_0(E)=0$ there is the possibility that $E$ has a subobject $E'\hookrightarrow E$ with $\mu_{D,tH}(E')=\mu_{D,tH}(E)$ for all $t>0$, making it impossible for $E$ to ever be stable. Fortunately, this only happens if $ch_0(E')=0$, thus by further assuming $ch_0(E')\neq 0$ in the argument above we obtain $E$ $\sigma_{D,tH}$-psudo-stable. 
\end{proof}
\begin{remark} The conclusion of Proposition \ref{MMP1} also holds below the wall, i.e., if $A$ and $B$ are $\sigma_{D,tH}$-stable with $\mu_{D,tH}(A)<\mu_{D,tH}(B)$ for $t_0-\epsilon < t\leq t_0$, then there is $\delta>0$ such that every nonsplit extension $0\rightarrow A\rightarrow E\rightarrow B\rightarrow 0$ is $\sigma_{D,tH}$-stable (or $\sigma_{D,tH}$-pseudo-stable when $ch_0(E)=0$) for all $t_0-\delta<t<t_0$.
\end{remark}

Moreover, the more general result holds:
\begin{proposition}\label{MMP3}
Let $E$ be an object in $\mathcal{A}_{DH}$ which is strictly $\sigma_{D,t_0H}$-semistable for some $t_0>0$. Assume that $E$ has a Jordan-H\"older filtration at the wall determined by $t_0$ that becomes the Harder-Narasimhan filtration of $E$ on one of the chambers determined by $t_0$, then $E$ is $\sigma_{D,tH}$-stable (or $\sigma_{D,tH}$-pseudo-stable when $ch_0(E)=0$) on the other chamber. 
\end{proposition}
\begin{proof}
Without loss of generality we can assume that $E$ is $\sigma_{D,tH}$-unstable for $t<t_0$. Assume that at the wall determined by $t_0$, $E$ has a Jordan-H\"older filtration 
$$
0=E_0\subset E_1\subset \cdots \subset E_{n-1}\subset E_n=E
$$
such that for $t$ sufficiently near $t_0$ and above the wall $F_i=E_i/E_{i-1}$ is $\sigma_{D,tH}$-stable and the sequence $\mu_{D,tH}(F_i)$ is strictly increasing. Then by applying Proposition \ref{MMP1} to the exact sequences
$$
0\rightarrow E_{i-1}\rightarrow E_i\rightarrow F_i\rightarrow 0
$$
we conclude that there exist $\delta>0$ such that each $E_i$ is $\sigma_{D,tH}$-stable for all $t\in (t_0,t_0+\delta)$. In particular $E_n=E$ is $\sigma_{D,tH}$-stable for every $t$ in this interval (or $\sigma_{D,tH}$-pseudo-stable if $ch_0(E)=0$).
\end{proof}
\begin{proposition}\label{MMP2} Let us assume that the length of a Jordan-H\"older filtration for $E$ (and so of any) is 2 at a wall determined by $t_0>0$, and that $E$ fits into a diagram
$$
\begin{diagram}\dgARROWLENGTH=1.5em
\node{}\node{}\node{B'}\arrow{s,J}\node{}\\
\node{A}\arrow{e,J}\node{E}\arrow{e,A}\node{B}\arrow{s,A}\\
\node{}\node{}\node{B''}\\
\end{diagram}
$$
where $A,B'$ and $B''$ are the $\sigma_{D,t_0H}$-stable factors of $E$. Assume that there is $\epsilon>0$ such that for all $t\in(t_0,t_0+\epsilon)$ $A,B',B''$ are $\sigma_{D,tH}$-stable and $\mu_{D,tH}(A)<\mu_{D,tH}(B')<\mu_{D,tH}(B'')$. Then there exists $\delta>0$ such that objects $\tilde{E}$ that are extensions of the form
$$
\begin{diagram}\dgARROWLENGTH=1.5em
\node{}\node{}\node{}\node{B'}\node{}\\
\node{0}\arrow{e}\node{A}\arrow{e}\node{\tilde{E}}\arrow{e}\node{\tilde{B}}\arrow{n,A}\arrow{e}\node{0}\\
\node{}\node{}\node{}\node{B''}\arrow{n,J}\node{}
\end{diagram}
$$
can not be stable for any  $t_0<t<t_0+\delta$.
\end{proposition}
\begin{proof}
By Proposition \ref{MMP1} there exists $\delta>0$ such that all extensions 
$$
0\rightarrow A\rightarrow E\rightarrow B\rightarrow 0
$$
are $\sigma_{D,tH}$-stable for $t_0<t<t_0+\delta$. Taking $\delta<\epsilon$ we have $\Hom(B',A)=0$ since $\mu_{D,tH}(A)<\mu_{D,tH}(B')$, and therefore we get an inclusion
$$
\Ext(1,B'',A)\hookrightarrow\Ext(1,B,A).
$$
The image of every nonzero element corresponds to a non split extension which is stable by Proposition \ref{MMP3}. Such extensions admit an injective morphism $B'\hookrightarrow E$ that can be visualized in the diagram
$$
\begin{diagram}\dgARROWLENGTH=1.5em
\node{}\node{}\node{}\node{0}\arrow{s}\node{}\\
\node{}\node{}\node{}\node{B'}\arrow{sw,t}{\exists}\arrow{s}\node{}\\
\node{0}\arrow{e}\node{A}\arrow{s,=}\arrow{e}\node{E}\arrow{s}\arrow{e}\node{B}\arrow{s}\arrow{e}\node{0}\\
\node{0}\arrow{e}\node{A}\arrow{e}\node{G}\arrow{e}\node{B''}\arrow{e}\arrow{s}\node{0}\\
\node{}\node{}\node{}\node{0}\node{}\\
\end{diagram}
$$

Stability of $E$ implies $\mu_{D,tH}(B')<\mu_{D,tH}(E)=\mu_{D,tH}(\tilde{E})$ and so $B'$ destabilizes $\tilde{E}$ via the composition $\tilde{E}\twoheadrightarrow \tilde{B}\twoheadrightarrow B'$. Thus the only other possibility is $\Ext(1,B'',A)=0$  which gives a surjective map $\Ext(1,B',A)\twoheadrightarrow \Ext(1,\tilde{B},A)$ implying that $\tilde{E}$ is a pullback of an extension of $B'$ by $A$. As before, there is an injective map $B''\hookrightarrow \tilde{E}$
$$
\begin{diagram}\dgARROWLENGTH=1.5em
\node{}\node{}\node{}\node{0}\arrow{s}\node{}\\
\node{}\node{}\node{}\node{B''}\arrow{sw,t}{\exists}\arrow{s}\node{}\\
\node{0}\arrow{e}\node{A}\arrow{s,=}\arrow{e}\node{\tilde{E}}\arrow{s}\arrow{e}\node{\tilde{B}}\arrow{s}\arrow{e}\node{0}\\
\node{0}\arrow{e}\node{A}\arrow{e}\node{\tilde{G}}\arrow{e}\node{B'}\arrow{e}\arrow{s}\node{0}\\
\node{}\node{}\node{}\node{0}\node{}\\
\end{diagram}
$$

Again the stability of $E$ and the composition $E\twoheadrightarrow B\twoheadrightarrow B''$ imply $\mu_{D,tH}(B'')>\mu_{D,tH}(E)=\mu_{D,tH}(\tilde{E})$, and so $B''$ destabilizes $\tilde{E}$.
\end{proof}
\subsection{The case of the projective plane}
In the case of $\P^2$ and the stability conditions $\sigma_{s,t}$, a wall for a Chern character $v$ is produced when there is an object $E$ with $ch(E)=v$ and an inclusion $A\hookrightarrow E$ in some $\mathcal{A}_{s_0}$ such that
$$
\mu_{s_0,t}(A)=\mu_{s_0,t}(E).
$$ 

Using the explicit formula for $\mu_{s,t}$, it is proven in \cite{ABCH} that the walls are nested semicircles in the $(s,t)$-upper half plane with center on the $s$-axis. Denote by $W_{ch(A),ch(E)}$ the wall corresponding to the inclusion $A\hookrightarrow E$. 

\begin{lemma}\cite[Lemma 6.3]{ABCH}\label{for bounds} Let $E$ be a coherent sheaf on $\P^2$ which is either a torsion sheaf supported in codimension 1, or a torsion-free sheaf (not necessarily Mumford-semistable) satisfying the Bogomolov inequality:
$$
ch_2(E)<\frac{ch_1(E)^2}{2r(E)}
$$
and suppose $A\rightarrow E$ is a map of coherent sheaves which is an inclusion of $\sigma_{s_0,t_0}$-semistable objects of $\mathcal{A}_{s_0}$ of the same slope for some
$$
(s_0,t_0)\in W:=W_{ch(A),ch(E)}.
$$
Then $A\rightarrow E$ is an inclusion of $\sigma_{s,t}$-semi-stable objects of $\mathcal{A}_s$ of the same slope for every point $(s,t)\in W$. 
\end{lemma}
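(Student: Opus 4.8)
The plan is to show that four statements, all true at $(s_0,t_0)$, remain true at every point of the arc $W$, which is connected: (1) $A,E\in\mathcal{A}_s$; (2) the sheaf morphism $A\to E$ is a monomorphism in $\mathcal{A}_s$, i.e.\ the cokernel sheaf $B:=E/A$ lies in $\mathcal{Q}_s$; (3) $\mu_{s,t}(A)=\mu_{s,t}(E)$; (4) $A$ and $E$ are $\mu_{s,t}$-semistable. Statement (3) is essentially the definition of $W$: inserting the explicit formula for $Z_{s,t}$ into the requirement that $Z_{s,t}(A)$ and $Z_{s,t}(E)$ be $\R$-proportional, a short computation identifies this locus as $\{c(s^2+t^2)+\beta s+\gamma=0\}$, where $c,\beta,\gamma$ are, up to sign and a factor of two, the three $2\times2$ minors of the matrix with rows $ch(A)$ and $ch(E)$; this is the semicircle $W$ (degenerating to a vertical line, or to the whole half-plane, in the special cases). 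Once (1) holds, $Z_{s,t}(A)$ and $Z_{s,t}(E)$ are nonzero and lie in the closed upper half-plane, so an $\R$-proportionality with a negative ratio would put both charges on the real axis — hence on the negative real axis by the axioms of a pre-stability condition — contradicting a negative ratio; the ratio is therefore positive, the phases agree, and (3) follows.

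For (1) and (2): $A$, $E$ and $B$ are coherent sheaves, and a sheaf lies in $\mathcal{A}_s$ precisely when it lies in $\mathcal{Q}_s$, i.e.\ when it is torsion or is torsion-free with $\mu_{\min}>s$. If $E$ is torsion (supported in codimension one) it lies in every $\mathcal{Q}_s$, and then $A$ and $B$ are torsion as well, so (1) and (2) are automatic. If $E$ is torsion-free, heart-membership at $(s_0,t_0)$ forces the three slopes $\mu_{\min}(A)$, $\mu_{\min}(E)$, $\mu_{\min}(B)$ (of the torsion-free parts) all to exceed $s_0$, and one checks from the equation of the semicircle $W$ above, together with the Bogomolov inequality on $E$, that every point of $W$ has $s$-coordinate strictly below $\min\{\mu_{\min}(A),\mu_{\min}(E),\mu_{\min}(B)\}$; concretely this amounts to bounding the rightmost point of $W$ in terms of the minors and comparing with those Mumford slopes, the Bogomolov hypothesis being exactly what keeps the circle from reaching too far to the right.

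For (4), suppose toward a contradiction that $E$ fails to be $\mu_{s_1,t_1}$-semistable for some $(s_1,t_1)\in W$, and let $F\hookrightarrow E$ in $\mathcal{A}_{s_1}$ be its maximal destabilizing subobject there, so $F$ is $\mu_{s_1,t_1}$-semistable with $\phi_{s_1,t_1}(F)>\phi_{s_1,t_1}(E)$. Then $W_{ch(F),ch(E)}$ — the locus where $Z_{s,t}(F)$ and $Z_{s,t}(E)$ are proportional — is a wall for $ch(E)$, as is $W=W_{ch(A),ch(E)}$ (realized at $(s_0,t_0)$ by $A\hookrightarrow E$), and the two are distinct, since coincidence would force $\phi_{s,t}(F)=\phi_{s,t}(E)$ everywhere on $W$, contradicting the strict inequality at $(s_1,t_1)$. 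Because the walls for $ch(E)$ are nested, $W_{ch(F),ch(E)}$ and $W$ are disjoint, so the connected arc $W$ lies entirely on one side of $W_{ch(F),ch(E)}$; that side contains $(s_1,t_1)$, hence it is the side on which $\phi_{s,t}(F)>\phi_{s,t}(E)$, and in particular $\phi_{s_0,t_0}(F)>\phi_{s_0,t_0}(E)$. It then remains to transport the inclusion $F\hookrightarrow E$ along $W$ back to $(s_0,t_0)$, keeping $F$ in $\mathcal{A}_s$ and the inclusion a monomorphism; doing so contradicts the $\mu_{s_0,t_0}$-semistability of $E$. The symmetric argument with $A$ in place of $E$ — $W$ being also a wall for $ch(A)$ — propagates the semistability of $A$, and with (1)--(3) this establishes (4) and hence the lemma.

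The step I expect to be the main obstacle is the transport of the hypothetical destabilizer $F$ along $W$: one must control its heart-membership and the monomorphy of its inclusion as $(s,t)$ varies. This is a genuine piece of bookkeeping, and it is the reason the hypothesis on $E$ (torsion, or torsion-free satisfying Bogomolov) is imposed: that hypothesis — together with the Bogomolov–Gieseker-type bound satisfied by the $\mu_{s,t}$-semistable objects $F$ and $A$ — bounds the Chern characters that can occur and is what makes the relevant family of walls locally finite and nested. Everything else — the identification of $W$ as a semicircle through its minors, the persistence of $A$, $E$, $B$ in the heart, and the equality of phases — is a direct computation with the central charge $Z_{s,t}$.
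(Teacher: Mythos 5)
First, a point of comparison: the paper does not prove this statement at all --- it is quoted verbatim from \cite[Lemma 6.3]{ABCH} --- so your attempt can only be judged on its own merits, and as it stands it has genuine gaps. The most serious one is your reading of ``inclusion of objects of $\mathcal{A}_{s_0}$'': you assume the sheaf map $A\rightarrow E$ is injective as a map of sheaves and reduce everything to the cokernel sheaf $B=E/A$. Being a monomorphism in the heart only means the cone lies in $\mathcal{A}_{s_0}$, i.e.\ the sheaf kernel $K$ satisfies $K\in\mathcal{F}_{s_0}$ and the sheaf cokernel lies in $\mathcal{Q}_{s_0}$; $K$ need not vanish. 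In fact, in the situation the lemma is designed for (and in the way this paper uses it immediately afterwards, in Section 5), $E$ is a torsion sheaf and the destabilizer has positive rank $r'$, so the sheaf kernel is nonzero of full rank; your claim that when $E$ is torsion ``$A$ and $B$ are torsion as well, so (1) and (2) are automatic'' is therefore false, and the part of the conclusion that is actually harvested from the lemma --- that $K\in\mathcal{F}_s$ for \emph{all} $s$ along $W$, giving $\mu_{\max}(K)\leq \frac{d}{c}-R$ --- never appears in your argument.

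Second, the two steps you treat as routine are precisely where the work lies. Your justification that $W$ stays in the region where all the sheaves remain in the heart (``the Bogomolov hypothesis is exactly what keeps the circle from reaching too far to the right'') is not enough: Bogomolov controls the numerical walls for $\operatorname{ch}(E)$ relative to the total Mumford slope $\mu(E)$, not relative to $\mu_{\min}(E)$, $\mu_{\min}(A)$ or $\mu_{\max}(K)$, and for a non-Mumford-semistable $E$ these differ. Ruling out that $W$ crosses the vertical lines $s=\mu_{\min}(A)$, $s=\mu_{\min}(E)$ (and $s=\mu_{\max}(K)$ from the left) requires using the $\mu_{s_0,t_0}$-semistability hypothesis together with the wall geometry (the extreme Mumford HN sub/quotient produces a numerical wall for the same class, which by nestedness cannot meet $W$), not a direct computation from the circle equation. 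Finally, for semistability along $W$ you correctly invoke nestedness of walls for a fixed class, but you explicitly leave open the transport of the hypothetical destabilizer $F$ along $W$ back to $(s_0,t_0)$ --- its membership in $\mathcal{A}_{s_0}$ and the monomorphy of $F\rightarrow E$ there. That step is not bookkeeping: it is a statement of exactly the same nature as the lemma being proved, and postponing it makes the argument circular at its crux. So the proposal sketches the right overall mechanism (wall equation, heart membership along $W$, nested walls), but it does not constitute a proof.
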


Lemma \ref{for bounds} was used in \cite{ABCH} to provided specific bounds on the radius of the walls and via an identification of $\sigma_{s,t}$-stability with quiver stability, it is shown that if $E$ is a Mumford stable torsion-free sheaf of primitive Chern vector $v$ then there are finitely many isomorphism types of moduli spaces of $\sigma_{s,t}$-stable objects with invariants $v$, i.e., finitely many walls intersecting the slice $\{\sigma_{s,t}\}_{s,t\in \R;t>0}\subseteq \Stab(\P^2)$.

 We finish this section by recalling that for a primitive Chern vector $v$ the moduli space $M_H(v)$ of semistable torsion-free plane sheaves of type $v$ is smooth and a Mori dream space (see \cite{HL}, or \cite{BMW} for a detailed explanation). It is shown in \cite{ABCH} that above the outermost wall $M_{s,t}(v)\cong M_H(v)$, and the argument given in \cite{BMW} shows that decreasing $t$ corresponds to running a directed minimal model program for $M_H(v)$ so that each $M_{s,t}(v)$ is a birational model. 
 
 Things are slightly different when studying the Gieseker moduli of 1-dimensional sheaves although most of the arguments are the same. By the work of Le Potier \cite{LP2} we know that these moduli spaces are irreducible, locally factorial and their Picard group is free abelian of rank 2. Moreover, a specific set of generators is given, namely: the determinant line bundle and the line bundle giving the support map. It is not hard then to prove that the Gieseker moduli of 1-dimensional plane sheaves of fixed invariants is also a Mori dream space (an argument can be found in \cite{W}).


 \section{Duality}\label{duality}
Let $X$ be a smooth projective surface, $D,H\in N^1(X)_{\R}$ with $H$ ample. Let $\sigma_{D,tH}=(Z_{D,tH}, \mathcal{A}_{DH})$ be the stability condition of Theorem \ref{stabilityfunction} and assume that projective coarse moduli spaces for $\sigma_{D,tH}$ and $\sigma_{-D+K,tH}$ are known to exist. For example, $X$ can be $\P^2$ \cite{ABCH}, $\P^1\times \P^1$ or the blow up of $\P^2$ at one point \cite{AM15}, or a $K3$ surface \cite{BM}. This section is devoted to prove the following result: 

\begin{theorem}\label{duality} The functor $\mathcal{F}\mapsto \mathcal{F}^{D}:=R\mathcal{H}om(\mathcal{F},\omega_{X})[1]$ induces an isomorphism between the Bridgeland moduli $M_{D,tH}(ch(F))\cong M_{-D+K_X,tH}(ch(F^D))$ provided that $ch(F)$ is the chern character of an object in $\mathcal{A}_{DH}$ of phase in $(0,1)$. 
\end{theorem}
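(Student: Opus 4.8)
The plan is to prove this in three steps: a central-charge computation, a structural lemma for $\sigma_{D,tH}$-semistable objects of phase in $(0,1)$, and a formal passage from objects to moduli schemes. \textbf{Step 1 (formal properties and central charges).} Since $\omega_X$ is a line bundle, the functor $\mathbb{D}:=R\mathcal{H}om(-,\omega_X)[1]=(-)^{\vee}\otimes\omega_X[1]$ is a contravariant autoequivalence of $D^b(X)$ with $\mathbb{D}\circ\mathbb{D}\cong\mathrm{id}$. By Grothendieck--Riemann--Roch, $ch(F^D)=-\,ch(F)^{\vee}\cdot e^{K_X}$, where $\alpha\mapsto\alpha^{\vee}$ is the ring involution of $H^*(X)$ acting by $(-1)^i$ on $H^{2i}$; since this involution fixes $\int_X$ and satisfies $(e^{\gamma})^{\vee}=e^{-\gamma}$, a short computation gives the key identity
$$Z_{-D+K_X,\,tH}(F^D)=-\,\overline{Z_{D,\,tH}(F)}.$$
Hence $\mathbb{D}$ preserves the condition ``$Z$ lies in the open upper half plane'' and sends phase $\phi$ to phase $1-\phi$ --- once we know that $F^D$ lies in the heart $\mathcal{A}_{(-D+K_X)H}$.

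\textbf{Step 2 (the structural lemma --- the main obstacle).} The core is: if $F\in\mathcal{A}_{DH}$ is $\sigma_{D,tH}$-semistable of phase in $(0,1)$, then $F^D\in\mathcal{A}_{(-D+K_X)H}$. I would apply $\mathbb{D}$ to the canonical triangle $\mathcal{H}^{-1}(F)[1]\to F\to\mathcal{H}^0(F)$ of $\mathcal{A}_{DH}$ to obtain $\mathcal{H}^0(F)^D\to F^D\to\mathcal{H}^{-1}(F)^D[-1]$, and chase cohomology sheaves; since $\mathcal{H}^{-1}(F)$ is torsion-free on a smooth surface, $\mathscr{E}xt^{\geq2}(\mathcal{H}^{-1}(F),\omega_X)=0$. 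The claim then reduces to three facts about a semistable object of phase $<1$: (i) $\mathcal{H}^{-1}(F)$ is locally free --- otherwise the nonzero $0$-dimensional sheaf $\mathcal{H}^{-1}(F)^{\vee\vee}/\mathcal{H}^{-1}(F)$ would be a phase-$1$ subobject of $F$; (ii) its twisted maximal slope satisfies $\mu_{max}(\mathcal{H}^{-1}(F))<0$ --- otherwise a saturated slope-$0$ subsheaf $G$ yields a phase-$1$ subobject $G[1]\hookrightarrow F$ (one checks the cokernel remains in $\mathcal{A}_{DH}$); (iii) $\mathcal{H}^1(F^D)=0$ --- equivalently, granting (i), the connecting map $\mathcal{H}om(\mathcal{H}^{-1}(F),\omega_X)\to\mathscr{E}xt^2(\mathcal{H}^0(F),\omega_X)$ is onto, i.e. $\mathscr{E}xt^2(F,\omega_X)=0$; this I expect to be the main obstacle, since here one must squeeze out of semistability enough control on the quotient object $\mathcal{H}^0(F)$ (ruling out $0$-dimensional subsheaves, and more generally phase-$1$ subquotients, inside $\mathcal{H}^0(F)$). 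Granting (i)--(iii): $\mathcal{H}^{-1}(F^D)\cong\mathcal{H}om(\mathcal{H}^0(F),\omega_X)\in\mathcal{F}_{(-D+K_X)H}$, and $\mathcal{H}^0(F^D)$ is an extension of the locally free sheaf $\mathcal{H}om(\mathcal{H}^{-1}(F),\omega_X)$ by the torsion sheaf $\mathscr{E}xt^1(\mathcal{H}^0(F),\omega_X)$, both lying in the torsion class $\mathcal{Q}_{(-D+K_X)H}$ by (ii) and the slope identity $\mu_{-D+K_X,H}(\mathcal{G}^{\vee}\otimes\omega_X)=-\mu_{D,H}(\mathcal{G})$ for torsion-free $\mathcal{G}$; so $F^D\in\mathcal{A}_{(-D+K_X)H}$.

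\textbf{Step 3 (objects to moduli).} Granting Step 2, $\mathbb{D}$ restricts to a contravariant exact anti-equivalence carrying $\sigma_{D,tH}$-semistable objects of phase $\phi\in(0,1)$ into $\mathcal{A}_{(-D+K_X)H}$: a short exact sequence $0\to A\to F\to B\to0$ in $\mathcal{A}_{DH}$ with all terms of phase in $(0,1)$ dualizes to $0\to B^D\to F^D\to A^D\to0$ in $\mathcal{A}_{(-D+K_X)H}$, and by Step 1 phases get reversed, so ``every subobject of $F$ has phase $\leq\phi$'' turns into ``every quotient object of $F^D$ has phase $\geq1-\phi$''; hence $F^D$ is $\sigma_{-D+K_X,tH}$-semistable of phase $1-\phi$, and $\mathbb{D}\circ\mathbb{D}\cong\mathrm{id}$ exhibits the inverse. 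As $\mathbb{D}$ sends Jordan--H\"older filtrations to Jordan--H\"older filtrations and stable objects to stable objects, it preserves $S$-equivalence classes. Finally, I would realize $\mathbb{D}$ by the relative functor $R\mathcal{H}om_{X\times S/S}(-,\,\omega_X\boxtimes\mathcal{O}_S)[1]$; because $\omega_X$ is locally free this commutes with derived fibre restrictions, hence sends flat families of $\sigma_{D,tH}$-semistable objects of type $ch(F)$ to flat families of $\sigma_{-D+K_X,tH}$-semistable objects of type $ch(F^D)$. The universal property of the coarse moduli spaces then yields a morphism $M_{D,tH}(ch(F))\to M_{-D+K_X,tH}(ch(F^D))$ whose two-sided inverse is the same construction applied over $\mathcal{A}_{(-D+K_X)H}$, giving the asserted isomorphism.
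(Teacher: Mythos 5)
Your overall architecture (the charge anti-symmetry, a structural lemma placing duals of semistable phase-$(0,1)$ objects in $\mathcal{A}_{(-D+K_X)H}$, then families and the coarse moduli universal property) is the same as the paper's, and your Step 1 is correct: it is exactly the identity $\mu_{D,tH}(E)=-\mu_{-D+K_X,tH}(E^D)$ used in the paper. However, the two points you leave open are precisely where the content lies. First, your Step 2(iii), the vanishing $\mathcal{H}^1(F^D)=\mathscr{E}xt^2(F,\omega_X)=0$, is not proved, and the direct attack you sketch is unlikely to close: a $0$-dimensional subsheaf of $\mathcal{H}^0(F)$ is only a subquotient of $F$ in the heart (its preimage in $F$ contains $\mathcal{H}^{-1}(F)[1]$), so semistability of $F$ does not forbid it; what has to be shown is surjectivity of the connecting map $\mathcal{H}om(\mathcal{H}^{-1}(F),\omega_X)\to\mathscr{E}xt^2(\mathcal{H}^0(F),\omega_X)$, and the paper gets this for free from \cite[10.1(a)]{B2}: a \emph{stable} object of phase in $(0,1)$ is quasi-isomorphic to a two-term complex of locally free sheaves, hence so is its dual, which kills $\mathcal{H}^1(F^D)$ outright. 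The strictly semistable case is then handled not by a cohomology chase but by dualizing a Jordan--H\"older filtration and using that the new heart is extension-closed (part (c) of Lemma \ref{lemma dual}); your proposal never invokes local finiteness/JH filtrations at this point.

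Second, your Step 3 claim that semistability is preserved is incomplete: knowing that short exact sequences with all three terms of phase in $(0,1)$ dualize well only controls subobjects of $F^D$ of the special form $B^D$. To test semistability of $F^D$ you must handle an \emph{arbitrary} subobject $\mathcal{K}\subset F^D$ in $\mathcal{A}_{(-D+K_X)H}$, and nothing in your argument guarantees that $\mathcal{K}^D$ and $(F^D/\mathcal{K})^D$ lie back in $\mathcal{A}_{DH}$ with the dualized triangle again short exact there; a phase-$1$ factor such as $\mathbb{C}_x$ dualizes to $\mathbb{C}_x[-1]$, which leaves the heart. This is exactly the step the paper labors over: it first shows that $E^D$ (for $E$ stable) admits no subobject with a phase-$1$ stable factor, using the classification of stable phase-$1$ objects (skyscrapers and $G[1]$ with $G$ locally free of boundary slope, \cite[10.1(b)]{B2}), the two-term locally free representative, and Schur's lemma; only then can one take a stable destabilizing quotient, dualize, and contradict stability of $E$, and only then does dualizing Jordan--H\"older filtrations give semistability and preservation of $S$-equivalence. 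Your family/moduli step (relative dual commuting with derived restriction, factorization through the coarse moduli, $(\_)^{DD}=\mathrm{id}$) agrees with the paper's, but it rests on these two unproven points, so as written the proposal has genuine gaps.
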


This result was proven by Maican \cite{MDUAL} for moduli spaces of Gieseker semistable sheaves on $\P^n$ supported on curves. The theorem above recovers Maican's for $X=\P^2$ and $t\gg 0$. For the proof in this context we will need the following

\begin{lemma}\label{lemma dual} Let $E$ be a $\sigma_{D,tH}$-(semi)stable object in $\mathcal{P}_{\sigma_{D,tH}}(0,1)$. Then
\begin{enumerate}
\item If $E$ is $\sigma_{D,tH}$-stable then it is quasi-isomorphic to a two-term complex of vector bundles $E^{-1}\rightarrow E^0$.
\item $\mathcal{H}^{-1}(E)$ is torsion-free with Mumford semistable factors of Mumford slope $<DH$.
\item If $A\in \mathcal{A}_{DH}$ is an object all of whose $\sigma_{D,tH}$-semistable factors belong to $\mathcal{P}_{\sigma_{D,tH}}(0,1)$ then $A^D\in\mathcal{A}_{(-D+K_{X})H}$. 
\item $E^{D}\in\mathcal{A}_{(-D+K_X)H}$ is $\sigma_{-D+K_X,tH}$-(semi)stable.
\item If $E,F\in \mathcal{A}_{DH}$ are $S$-equivalent then so are $E^{D},F^{D}\in\mathcal{A}_{(-D+K_X)H}$.   
\item For any flat family $\mathbf{F}\in D^{b}(S\times X)$ of $\sigma_{D,tH}$-semistable objects in $\mathcal{A}_{DH}$ with fibers of invariants $ch(F_s)$ there is a flat family $\mathbf{F}^{D}\in D^{b}(S\times X)$ with fibers of invariants $ch({F_s}^D)$ such that
$$
Li_s^*(\mathbf{F}^D)\cong (Li_s^*\mathbf{F})^D.
$$
\end{enumerate}
\end{lemma}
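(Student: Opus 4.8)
\noindent\textit{Proof sketch (plan).} The strategy is to extract (1) from $\sigma_{D,tH}$-stability by a purely homological argument and then deduce everything else formally from two facts. By Serre--Grothendieck duality, $\mathbb{D}_X:=R\mathcal{H}om(-,\omega_X)$ is a contravariant autoequivalence of $D^b(X)$ with $\mathbb{D}_X^2\cong\mathrm{id}$, so $(-)^D=\mathbb{D}_X(-)[1]$ is an anti-autoequivalence squaring to the identity, and on the surface Serre duality reads $\mathrm{Ext}^i(A,B)\cong\mathrm{Ext}^{\,2-i}(B,A\otimes\omega_X)^{\vee}$. A direct Chern-character computation gives $Z_{-D+K_X,tH}(\mathcal{F}^D)=-\overline{Z_{D,tH}(\mathcal{F})}$, so $(-)^D$ sends an object of phase $\phi$ to one of phase $1-\phi$; in particular it interchanges $\mathcal{P}(0,1)$ for $\sigma_{D,tH}$ with its analogue for $\sigma_{-D+K_X,tH}$ and reverses the order of phases.

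For (1) and (2): for every closed point $p$ the skyscraper $\C_p$ is $\sigma_{D,tH}$-stable of phase exactly $1$, so Schur's lemma gives $\mathrm{Hom}(\C_p,E)=0$, hence by Serre duality $\mathrm{Ext}^2(E,\C_p)\cong\mathrm{Hom}(\C_p,E)^{\vee}=0$; and $\mathrm{Ext}^j(E,\C_p)=0$ for $j\le-1$ for degree reasons and for $j\ge 3$ because $R\mathcal{H}om(E,\mathcal{O}_X)$ is concentrated in degrees $[0,2]$ (here using that $\mathcal{H}^{-1}(E)$ is torsion-free). Thus $E$ has tor-amplitude $[-1,0]$ and so is quasi-isomorphic to a complex of locally free sheaves in degrees $-1,0$, proving (1); in particular $\mathcal{H}^{-1}(E)$ is the kernel of a map of bundles on a surface, hence locally free, a fortiori torsion-free. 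If some Harder--Narasimhan factor of $\mathcal{H}^{-1}(E)\in\mathcal{F}_{DH}$ had Mumford slope $DH$, its saturated copy $G_1$ would give a monomorphism $G_1[1]\hookrightarrow\mathcal{H}^{-1}(E)[1]\hookrightarrow E$ in $\mathcal{A}_{DH}$ with $\mathfrak{I}\big(Z_{D,tH}(G_1[1])\big)=0$, i.e. a subobject of phase $1$, contradicting $E\in\mathcal{P}(0,1)$; since $\mathcal{H}^{-1}(E)\in\mathcal{F}_{DH}$ rules out slopes $>DH$, all factors have slope $<DH$, proving (2).

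For (3) and (4): $(-)^D$ preserves triangles and a heart is closed under extensions, so — as $A$ carries a finite filtration with semistable subquotients in $\mathcal{P}(0,1)$ — to prove (3) it suffices to take semistable $E\in\mathcal{P}(0,1)$. Writing $E\cong[V^{-1}\xrightarrow{d}V^0]$ with the $V^i$ locally free by (1) gives $E^D\cong[\mathcal{H}om(V^0,\omega_X)\xrightarrow{d^{\vee}}\mathcal{H}om(V^{-1},\omega_X)]$ in degrees $-1,0$; a short computation with the two short exact sequences into which $d$ factors identifies $\mathcal{H}^{-1}(E^D)=\mathcal{H}om(\mathcal{H}^0(E),\omega_X)$ and presents $\mathcal{H}^0(E^D)$ as an extension of a subsheaf of $\mathcal{H}om(\mathcal{H}^{-1}(E),\omega_X)$ with $0$-dimensional cokernel by the torsion sheaf $\mathcal{E}xt^1(\mathcal{H}^0(E),\omega_X)$. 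Since $(-)\otimes\omega_X$ shifts Mumford slopes by $K_X\cdot H$, the slope bound of (2) together with $\mathcal{H}^0(E)\in\mathcal{Q}_{DH}$ gives $\mathcal{H}^{-1}(E^D)\in\mathcal{F}_{(-D+K_X)H}$ and $\mathcal{H}^0(E^D)\in\mathcal{Q}_{(-D+K_X)H}$, so $E^D\in\mathcal{A}_{(-D+K_X)H}$. For (4), suppose $E^D$ (in $\mathcal{A}_{(-D+K_X)H}$ by (3)) is not semistable, with maximal destabilizing subobject $A$, semistable of phase $\psi>1-\phi$. If $\psi<1$, then $A$ and $E^D/A$ have all factors in $\mathcal{P}(0,1)$, so applying $(-)^D$ to $0\to A\to E^D\to E^D/A\to0$ yields $0\to(E^D/A)^D\to E\to A^D\to0$ in $\mathcal{A}_{DH}$ (using (3) with $D$ replaced by $-D+K_X$), realizing $A^D$ as a quotient of $E$ of phase $1-\psi<\phi$ — impossible since $E$ is semistable. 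If $\psi=1$, then $A$ contains a $\sigma_{-D+K_X,tH}$-stable subobject of phase $1$, namely a $\C_p$ or some $V[1]$ with $V$ $\mu_H$-stable of slope $(-D+K_X)H$; but $\mathrm{Hom}(\C_p,E^D)\cong\mathrm{Hom}(E,\mathbb{D}_X\C_p)=0$ for degree reasons and $\mathrm{Hom}(V[1],E^D)\cong\mathrm{Hom}(E,\mathbb{D}_X V)=0$ because $\mathbb{D}_X V$ is Mumford-semistable of slope $DH$ while $\mathcal{H}^0(E)\in\mathcal{Q}_{DH}$ admits no nonzero map to it. Hence $E^D$ is semistable, and running the same argument with a proper subobject of phase exactly $1-\phi$ promotes stable to stable.

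Finally, (5) follows because $(-)^D$ carries a Jordan--Hölder filtration of $E\in\mathcal{P}(\phi)$ to one of $E^D\in\mathcal{P}(1-\phi)$ whose stable factors are the $(-)^D$ of those of $E$, so uniqueness of stable factors transports $S$-equivalence; and for (6) one sets $\mathbf{F}^D:=R\mathcal{H}om_{S\times X}(\mathbf{F},p_X^{*}\omega_X)[1]$, notes that by (1) each fibre has tor-amplitude $[-1,0]$ over $X$ so $\mathbf{F}$ is relatively perfect, and invokes flat base change for $R\mathcal{H}om$ to get $Li_s^{*}(\mathbf{F}^D)\cong R\mathcal{H}om(Li_s^{*}\mathbf{F},\omega_X)[1]=(Li_s^{*}\mathbf{F})^D$, which has the stated invariants. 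I expect part (1) to be the main obstacle: (3)--(6) are formal consequences of the anti-autoequivalence $(-)^D$, the charge identity and torsion-pair bookkeeping, whereas (1) is precisely the place where Bridgeland stability must be converted into a homological statement — the surface analogue of the fact that a pure one-dimensional sheaf has projective dimension $\le 1$ — and it is also what makes the relative construction in (6) legitimate; the secondary delicate point is the exclusion of phase-$1$ destabilizing subobjects in (4), which genuinely uses both Serre duality and the structure of the Arcara--Bertram heart.
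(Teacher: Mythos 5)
Your proof is correct and takes essentially the same approach as the paper: you merely inline the proof of part (1) via the vanishing of $\mathrm{Hom}(E,\C_p[i])$ for $i\notin[0,1]$ (which is exactly the argument behind the citation \cite[10.1(a)]{B2} that the paper invokes), and you make explicit the charge identity $Z_{-D+K_X,tH}(E^D)=-\overline{Z_{D,tH}(E)}$ where the paper records only its slope form. The remaining steps — the long exact sequence of cohomology sheaves applied to $\mathcal{H}^0(E)^D\rightarrow E^D\rightarrow \mathcal{H}^{-1}(E)[1]^D$ for (3), the exclusion of phase-one subobjects of $E^D$ through the two $\mathrm{Hom}$-vanishings, dualizing destabilizing and Jordan--H\"older sequences for (4) and (5), and the relative dual $R\mathcal{H}om(\mathbf{F},\omega_{S\times X/S})[1]$ for (6) — coincide with the paper's, the only point to tidy being that in (3) you apply (1) to semistable objects, which is harmless since your vanishing argument uses only semistability of $E$ and phase $<1$ (alternatively, reduce to stable Jordan--H\"older factors as the paper does).
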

\begin{proof}
Part (a) is the content of \cite[Lemma 10.1(a)]{B2}. For (b), notice that $E\in\mathcal{A}_{DH}$ fits into the short exact sequence 
$$
0\rightarrow \mathcal{H}^{-1}(E)[1]\rightarrow E\rightarrow\mathcal{H}^0(E)\rightarrow 0,
$$
and therefore if $\mu_{max}(\mathcal{H}^{-1}(E))=DH$ then $\mathcal{H}^{-1}(E)[1]$ will have a subobject $T$ in $\mathcal{A}_{DH}$ of phase 1, which will $\sigma_{D,tH}$-destabilize $E$. 

Before proving (c), note that for any coherent sheaf $F$ with Mumford semistable factors of Mumford slope $<DH$ (resp. $>DH$) we have
$$
\mathcal{H}^i(F^D)=\begin{cases}\text{torsion free sheaf with }\mu_{min}>-DH+K_XH\\
 (\text{resp. }\mu_{max}<-DH+K_XH)& \text{if}\ i=-1\\
\text{torsion or}\ 0\ & \text{if}\ i=0\\
\text{0-dim torsion sheaf or}\ 0\ &\text{if}\ i=1\\
0\ &\text{otherwise.}\end{cases}
$$
Indeed, if $F$ is locally free the statement is clear since $F^D=\mathcal{H}om(F,\mathcal{O})\otimes \omega_X[1]=F^*\otimes\omega_X[1]$. For torsion sheaves the statement follows from \cite[Proposition 1.1.6]{HL}. If $F$ is torsion-free then $F$ embeds into its double dual and we get an exact sequence
$$
0\rightarrow F\rightarrow F^{**}\rightarrow T\rightarrow 0,
$$
where $T$ is a 0-dimensional torsion sheaf, and the statement follows after taking cohomology on the exact triangle
$$
T^{D}\rightarrow F^{***}\otimes \omega_X[1]\rightarrow F^D\rightarrow T^D[1].
$$
In the general case, $F$ fits into an exact sequence
$$
0\rightarrow \mathcal{T}\rightarrow F\rightarrow \mathcal{F}\rightarrow 0,
$$ 
where $\mathcal{T}$ is its torsion subsheaf, and $\mathcal{F}$ is torsion-free. Thus the statement follows after taking cohomology on the distinguished triangle
$$
\mathcal{F}^{D}\rightarrow F^D\rightarrow \mathcal{T}^D\rightarrow \mathcal{F}^D[1].
$$
Now, assume that $E$ is $\sigma_{D,tH}$-stable and let us prove that $E^D\in \mathcal{A}_{(-D+K_X)H}$. Taking cohomology on the exact triangle
$$
\mathcal{H}^0(E)^D\rightarrow E^D\rightarrow \mathcal{H}^{-1}(E)[1]^D\rightarrow \mathcal{H}^0(E)^D[1]
$$
we get the long exact sequence of sheaves
\begin{eqnarray*}
0\rightarrow\mathcal{H}^{-1}(\mathcal{H}^0(E)^D)\rightarrow\mathcal{H}^{-1}(E^D)\rightarrow\mathcal{H}^{-1}(\mathcal{H}^{-1}(E)[1]^D)\rightarrow \\
\rightarrow \mathcal{H}^0(\mathcal{H}^0(E)^D)\rightarrow\mathcal{H}^0(E^D)\rightarrow\mathcal{H}^0(\mathcal{H}^{-1}(E)[1]^D)\rightarrow\mathcal{H}^1(\mathcal{H}^0(E)^D)\rightarrow 0
\end{eqnarray*}
since by (a) $E^D$ is a two-term complex of vector bundles. But $\mathcal{H}^{-1}(E)[1]^D=\mathcal{H}^{-1}(E)^D[-1]$ and so 
$$
\mathcal{H}^{-1}(\mathcal{H}^{-1}(E)[1]^D)=0.
$$
This implies that $\mathcal{H}^{-1}(E^D)\in \mathcal{F}_{(-D+K_X)H}$, $\mathcal{H}^0(\mathcal{H}^0(E)^D)$ is the torsion subsheaf of $\mathcal{H}^0(E^D)$ and because $\mathcal{H}^1(\mathcal{H}^0(E)^D)$ is zero-dimensional we have $\mathcal{H}^0(E^D)\in\mathcal{Q}_{(-D+K_X)H}$.

Moreover, this proves that  if $A\in \mathcal{P}_{\sigma_{D,tH}}(0,1)$ is $\sigma_{D,tH}$-stable, then $A^D$ is an element of $\mathcal{A}_{(-D+K_X)H}$. For arbitrary $A\in\mathcal{P}_{\sigma_{D,tH}}(0,1)$, $A$ is in the extension closure of some $\sigma_{D,tH}$-stable objects $A_1,\dots,A_k\in\mathcal{A}_{DH}$ of the same phase and so 
$$
A^D\in\langle {A_1}^D,\dots, {A_k}^D\rangle\subset \mathcal{A}_{(-D+K_X)H}.
$$
By the same argument we get (c).
 
Assume for the moment that $E$ is $\sigma_{D,tH}$-stable. Let us prove that there is no injective map
$$
0\rightarrow \mathcal{K}\rightarrow E^D
$$
in $\mathcal{A}_{(-D+K)H}$ with $\mathcal{K}$ having at least one of its Bridgeland semistable factors of phase 1. If so, there would be an inclusion
$$
0\rightarrow A\rightarrow E^D
$$
with $A\in\mathcal{P}_{\sigma_{-D+K,tH}}(1)$ stable, i.e., $A=\C_x$ or $A=F[1]$ for some locally free sheaf $F$ with $\mu_H$-semistable factors of slope $(-D+K_X)H$ (\cite[Lemma 10.1(b)]{B2}). But if $E$ is stable then $E^D$ is derived equivalent to a two-term complex of vector bundles implying $\Hom(\C_x,E^D)=0$. Also 
$$
\Hom(F[1],E^D)=\Hom(F,\mathcal{H}^{-1}(E^D))=0
$$
in virtue of Schur's lemma.

Let us now prove that $E^D$ must be $\sigma_{-D+K_X,tH}$-stable. Indeed, if there is a destabilizing sequence
$$
0\rightarrow A\rightarrow E^D\rightarrow B\rightarrow 0
$$
in $\mathcal{A}_{(-D+K_X)H}$, we can choose $B$ to be $\sigma_{-D+K_X,tH}$-stable and by the argument above we know that all $\sigma_{-D+K_X,tH}$-semistable factors of $A$ have phase in $(0,1)$, then by dualizing this sequence we get a destabilizing sequence for $E$ in $\mathcal{A}_{DH}$ since
$$
\mu_{D,tH}(\cdot)=-\mu_{-D+K_X,tH}(\cdot)^D.
$$

We conclude that $E^D$ is $\sigma_{-D+K_X,tH}$-semistable for all $\sigma_{D,tH}$-semistable objects $E$ of phase in $(0,1)$ just by dualizing the Jordan-H\"older filtration of $E$: Let $0=F_0\subset F_1\subset F_2\subset\cdots \subset F_n=E$ be a $\sigma_{D,tH}$-Jordan-H\"older filtration for $E$ in $\mathcal{A}_{DH}$, then $\left(E/F_n\right)^D\subset \left(E/F_{n-1}\right)^D\subset\cdots\subset E^D$ is a $\sigma_{-D+K_X,tH}$-Jordan-H\"older filtration for $E^D$ in $\mathcal{A}_{(-D+K_X)H}$ with stable factors $(F_i/F_{i-1})^D$. This also gives part (d).

For the last part let $\mathbf{F}^D:=R\mathcal{H}om(\mathbf{F},\omega_{S\times X/S})$ then
$$
Li_s^*(R\mathcal{H}om(\mathbf{F},\omega_{S\times X/S}))\cong R\mathcal{H}om(Li_s^*\mathbf{F},\omega_{X})\in\mathcal{A}_{(-D+K_X)H}.
$$
\end{proof}

\begin{proofof}[Theorem \ref{duality}] Every flat family $\mathbf{F}\in D^{b}(S\times X)$ of topological type $v$ gives a morphism $\pi:S\rightarrow M_{D,tH}(v)$ and a morphism $\pi^D:S\rightarrow M_{-D+K_X,tH}(v^D)$ corresponding to the family $\mathbf{F}^D$ of the lemma. Since $\pi^D$ is constant on the fibers of $\pi$ by part (e) then $\pi^D$ factors through a morphism $M_{D,tH}(v)\rightarrow M_{-D+K_X,tH}(v^D)$ which sends the closed point representing $E$ to the closed point representing $E^D$. The symmetry of the situation and the fact that $(\_)^{DD}=Id$ prove that such morphism is an isomorphism. 
\end{proofof}
\begin{remark} In the special case when $X=\P^2$ and $v=(0,d,-3d/2)$ duality gives an automorphism $(\_)^D:M_{-3/2,t}(v)\cong M_{-3/2,t}(v)$ for all $t>0$.
\end{remark}

 
 \begin{corollary}\label{duality2}
 Let $[C]\in \NS(X)$ be a curve class and $H\in \Amp(X)$. Then the functor $\mathcal{F}\mapsto \mathscr{E}xt^1(\mathcal{F},\omega_{X})$ induces an isomorphism between the moduli spaces of $H$-Gieseker semistable sheaves $M_H(0,[C],ch_2)$ and $M_H(0,[C],K_X\cdot C-ch_2)$.
 \end{corollary}
 \begin{proof} Take $D=K_X/2$ in the duality theorem. Then
 \begin{equation}\label{slopes for corollary 1}
  \mu_{K_X/2,tH}(E)=\frac{ch_2(E)-ch_1(E)\cdot\frac{K_X}{2}+\frac{ch_0(E)}{2}(\frac{K_X^2}{4}-t^2H^2)}{(ch_1(E)-ch_0(E)\frac{K_X}{2})tH}.
 \end{equation}
 Thus, if $ch_0(E)=0$ and $ch_1(E)=[C]$ then
 \begin{equation}\label{slopes for corollary 2}
 \mu_{K_X/2,tH}(E)=\frac{ch_2(E)-C\cdot\frac{K_X}{2}}{tC\cdot H}=\frac{\chi(E)}{tC\cdot H}
 \end{equation}
 and therefore a sheaf of those invariants that is $\sigma_{K_X/2,tH}$-semistable is also $H$-Gieseker semistable. By \cite[Theorem 1.1]{LQ} we know that the values of $t$ for which there is an inclusion of objects $A\hookrightarrow E$ with $\mu_{K_X/2,tH}(A)=\mu_{K_X/2,tH}(E)$ is bounded above (this also follows by a result of Maciocia \cite[Theorem 3.11]{MACIO} when considering the family of stability conditions $\sigma_{K_X/2+sH,tH}$). If $E$ is an object with $ch_0(E)=0$ that is $\sigma_{K_X/2,tH}$-semistable for all $t\gg0$ then $E$ must be a sheaf since otherwise the natural inclusion $\mathcal{H}^{-1}(E)[1]\hookrightarrow E$ would destabilize $E$ for large values of $t$. Indeed, if $\mathcal{E}=\mathcal{H}^{-1}(E)$ is nonzero then
 \begin{align*}
  \mu_{K_X/2,tH}(\mathcal{E}[1])&=\frac{-ch_2(\mathcal{E})+ch_1(\mathcal{E})\cdot\frac{K_X}{2}-ch_0(\mathcal{E})(\frac{K_X^2}{8}-\frac{t^2}{2}H^2)}{-(ch_1(\mathcal{E})-ch_0(\mathcal{E})tH}>\frac{\chi(E)}{tC\cdot H} 
 \end{align*}
 for $t\gg 0$ because $\mathcal{E}$ is torsion-free.
 
 Conversely, let $E$ be a $H$-Gieseker semistable sheaf with $ch(E)=(0,[C],ch_2)$. If $A\hookrightarrow E$ is a subobject in $\mathcal{A}_{K_XH/2}$ that $\sigma_{K_X/2,tH}$-destabilizes $E$ for some $t$ then $A$ must be a sheaf, because $E$ is a sheaf, and of positive rank since otherwise it would destabilize $E$ with respect to $H$-Gieseker stability. Now, since $ch_0(A)>0$ then it follows from equations \eqref{slopes for corollary 1} and \eqref{slopes for corollary 2} that for $t\gg0$ 
 $$
 \mu_{K_X/2,tH}(A)<\mu_{K_X/2,tH}(E)
 $$
 and so the inclusion $A\hookrightarrow E$ must produce a wall. Since the walls are bounded above we conclude that above all walls $E$ is $\sigma_{K_X/2,tH}$-semistable. The coarse moduli spaces $M_H(0,[C],ch_2)$ were constructed by Simpson \cite{Simp} via invariant theory, and the conclusion follows from the duality theorem.
 \end{proof}


 \section{Bridgeland Walls for 1-Dimensional Plane Sheaves}\label{estimates} 
In this section and for the reminder of the paper $X=\P^2$. As mentioned in Section \ref{prelim}, moduli spaces of Gieseker semistable plane sheaves of Hilbert polynomial $P(t)=ct+\chi$ were studied by Le Potier in \cite{LP2}, where it is shown that these moduli spaces are irreducible, locally factorial, and smooth at the stable points. For small values of $c$ it is possible to find nice stratifications of these moduli spaces by studying their resolutions, see \cite{DM} for $c=4$ and  \cite{M5} and \cite{M6} for $c=5$ and $6$. Studying a sheaf by studying its possible resolutions is same as replacing such sheaf for an equivalent element in the derived category. Indeed, each strata in the stratifications given by Dr\'ezet and Maican in \cite{DM} and by Maican in \cite{M5} and \cite{M6} can be interpreted as a set of extensions in a tilted category \cite{BMW}. 
   
Moreover, as in \cite{BMW}, each set of extensions produces a Bridgeland wall, and these are all the walls for the directed MMP. The following numerical bound coming from Lemma \ref{for bounds} produces some of these sets of extensions for arbitrary value of $c$ even when Maican-type stratifications are unknown.

Let $E$ be a sheaf of topological type $(0,c,d)$ with $c>0$ and let $F$ be a destabilizing object (which is necessarily a sheaf), then $E$ and $F$ fit into an exact sequence
$$
0\rightarrow K\rightarrow F\rightarrow E
$$   
of coherent sheaves. By Lemma \ref{for bounds} we must have $K\in \mathcal{F}_s$ and $F\in\mathcal{Q}_s$ for all $s$ along the wall
$$
W_{ch(F),ch(E)}=\left\{(s,t)\colon \mu_{s,t}(F)=\mu_{s,t}(E)\right\}=\left\{(s,t)\colon A\frac{t^2}{2}+A\frac{s^2}{2}+Bs=C\right\},
$$
where 
\begin{align*}
A&=ch_0(F)ch_1(E)-ch_0(E)ch_1(F),\\
B&=ch_2(F)ch_0(E)-ch_2(E)ch_0(F),\\
C&=ch_2(F)ch_1(E)-ch_2(E)ch_1(F).
\end{align*}
If $ch(F)=(r',c',d')$, then in our case this wall is a semicircle with center 
$$
\left(-\frac{B}{A},0\right)=\left(\frac{ch_2(E)ch_0(F)}{ch_0(F)ch_1(E)},0\right)=\left(\frac{d}{c},0\right),
$$ 
and radius 
$$
R=\sqrt{\left(\frac{B}{A}\right)^2+2\frac{C}{A}}=\sqrt{\left(\frac{d}{c}\right)^2+2\frac{d'}{r'}-2\frac{dc'}{r'c}}.
$$ 
Therefore, we have
$$
\frac{ch_1(K)}{ch_0(K)}\leq \frac{d}{c}-R\ \ \ \text{ and }\ \ \ \ \frac{c'}{r'}\geq \frac{d}{c}+R. 
$$
Since $ch_0(K)=r'$ and $ch_1(K)-c'+c\geq 0$, then combining the inequalities above we get
\begin{equation}\label{bound1}
R+\frac{d}{c}\leq \frac{c'}{r'}\leq \frac{d}{c}+\frac{c}{r'}-R,
\end{equation}
which immediately produces
\begin{equation}\label{boundradius}
R\leq \frac{c}{2r'}.
\end{equation}

The purpose of the next section is to study Veronese surfaces and their secant varieties seeing these as special loci on a moduli space of Bridgeland semistable objects. To identify which moduli space we should look at, consider the $(d-3)$-uple embedding of $\P^2$ for an integer $d>3$,
$$
\nu_{d-3}\colon \P^2\rightarrow \P(H^0(\P^2,\mathcal{O}(d-3))^{\vee})\cong \P(\Ext(1,\mathcal{O}(d),\mathcal{O}[1])),
$$
where the last isomorphism is given by Serre duality. Thus, if $\mathcal{O}(d)$ and $\mathcal{O}[1]$ are both objects of a full abelian subcategory $\mathcal{A}\subset D^b(\P^2)$, then it would be possible to see the $(d-3)$-Veronese surface as a locus of non-split extensions 
$$
0\rightarrow \mathcal{O}[1]\rightarrow E\rightarrow \mathcal{O}(d)\rightarrow 0
$$
in $\mathcal{A}$. The topological type of all such extensions is 
$$
ch(E)=ch(\mathcal{O}[1])+ch(\mathcal{O}(d))=(0,d,\frac{d^2}{2}).
$$
When $d$ is odd, we can twist by the line bundle $\mathcal{O}((-d-3)/2)$ and obtain the invariants $v=ch((E(-d-3)/2))=(0,d,-3d/2)$. When $d$ is even, we can twist by $\mathcal{O}((-d-2)/2)$ and obtain the invariants $(0,d,-d)$. 

The Gieseker moduli spaces $M_H(0,d,-3d/2)$ (for $d$ odd) and $M_H(0,d,-d)$ (for $d$ even) behave similarly for our purposes, so we will concentrate in the case when $d$ is odd, and study the case when $d$ is even at the end of Section 6, where the differences will be explained. Nevertheless, the techniques used to study both cases are the same and (except for sections 6.2 and 6.3) every result for $d$ odd has a similar statement for $d$ even. 

Fix the numerical class $v=(0,d,-\frac{3}{2}d)$ with $d$ odd. One of the key ingredients in the computation that follows is the existence of a collapsing wall. The generic element of $M_H(v)$ corresponds to a sheaf $E$ satisfying $H^0(E)=0$ (because $\chi(E)=0$). By using the Beilinson spectral sequence (as in \cite[Section 2.2]{DM} or \cite[Proposition 6.1.1]{M6}) one can conclude that the general element of  $M_H(v)$ has a resolution of the form
$$
0\rightarrow d\mathcal{O}(-2)\rightarrow d\mathcal{O}(-1)\rightarrow E\rightarrow 0.
$$
Since $\mathcal{O}(-2)[1]$ and $\mathcal{O}(-1)$ are objects in $\mathcal{A}_{-3/2}$, then the general element of $M_H(v)$ fits into an exact sequence 
$$
0\rightarrow d\mathcal{O}(-1)\rightarrow E\rightarrow d\mathcal{O}(-2)[1]\rightarrow 0
$$
in $\mathcal{A}_{-3/2}$. In particular $\mathcal{O}(-1)$ produces a wall contracting an open set. By Riemann-Roch the radius of a wall produced by a $\sigma_{s,t}$-destabilizing subobject $F$ can be written as
$$
R=\sqrt{\frac{9}{4}+2\frac{ch_2(F)}{ch_0(F)}+3\frac{ch_1(F)}{ch_0(F)}}=\sqrt{\frac{1}{4}+\frac{2\chi(F)}{ch_0(F)}}.
$$
Thus, the wall $W_{ch(\mathcal{O}(-1)),v}$ has center $(-3/2,0)$ and radius 
$$
R=\sqrt{\frac{1}{4}+\frac{2\chi(\mathcal{O}(-1))}{ch_0(\mathcal{O}(-1))}}=\frac{1}{2}.
$$

The complement of such open set is the theta divisor \cite{LP2} and is the set of semistable sheaves that have at least one section, i.e., those that have $\mathcal{O}$ as a subobject. The corresponding wall $W_{ch(\mathcal{O}),v}$ has radius 
$$
R=\sqrt{\frac{1}{4}+\frac{2\chi(\mathcal{O})}{ch_0(\mathcal{O})}}=\frac{3}{2}.
$$ 
Crossing $W_{ch(\mathcal{O}),v}$  corresponds to a divisorial contraction and since $M_H(v)$ has Picard number 2 then there are no walls between $W_{ch(\mathcal{O}(-1)),v}$ and $W_{ch(\mathcal{O}),v}$. This gives a lower bound for the radius of walls corresponding to flips:

\begin{proposition}\label{radiusofflippingwalls}
Let $d>3$ be an odd integer. Let $F$ be a coherent sheaf of positive rank, and let $E$ be a coherent sheaf with $ch(E)=(0,d,-\frac{3}{2}d)$. A morphism of sheaves $F\rightarrow E$ which is an inclusion of objects in the category $\mathcal{A}_{-3/2}$ produces a wall corresponding to a flip if and only if
\begin{equation}\label{higher rank}
\frac{3}{2}<\sqrt{\frac{1}{4}+\frac{2\chi(F)}{ch_0(F)}}\leq \frac{d}{2ch_0(F)}.
\end{equation}
\end{proposition}
\begin{proof} Since the center of all walls of type $v=(0,d,-3d/2)$ in the $(s,t)$-plane is $(-3/2,0)$, then the 1-parameter family of stability conditions $\{\sigma_{-3/2,t}\}_{t>0}$ intersects every chamber. Because of the correspondence between Bridgeland wall-crossing for the topological type $v$ and the MMP for the moduli spaces $M_H(v)$ \cite[Theorem 1.1]{BMW}, we know that starting with $t\gg 0$ and then decreasing $t$ corresponds to run a directed Minimal Model Program on $M_H(v)$. Moreover, because $M_H(v)$ is a Mori dream space of Picard number 2, then the moduli spaces of $\sigma_{-3/2,t}$-semistable objects of topological type $v$ account for all the birational models of $M_H(v)$. Because there are no walls between $W_{ch(\mathcal{O},v)}$  and  $W_{ch(\mathcal{O}(-1),v)}$, then every wall corresponding to a flip has radius larger than the radius of $W_{ch(\mathcal{O},v)}$, i.e., $R>3/2$. The other inequality is \eqref{boundradius}.
\end{proof}
It is useful to know whether the new objects we get after crossing a wall are Bridgeland stable or pseudo-stable, and we can answer this in a very special case:
\begin{proposition}\label{pseudostableisstable} Let $v=(0,d,-3d/2)$ and assume that $E\in\mathcal{A}_{-3/2}$ is an object with $ch(E)=v$ that is Bridgeland semistable with a Jordan-H\"older filtration of length 1 for a stability condition at a wall $W$ of type $v$. Then $E$ is Bridgeland stable for stability conditions on one of the chambers determined by $W$. 
\end{proposition}
\begin{proof} Assume that the Jordan-H\"older filtration of $E$ at $W$ is
$$
0\rightarrow A\rightarrow E\rightarrow B\rightarrow 0
$$
and that $\mu_{-3/2,t}(A)>\mu_{-3/2,t}(B)$ above $W$, then $E$ is Bridgeland pseudo-stable below $W$ by Proposition \ref{MMP1}. Assume that $E$ is not Bridgeland stable below $W$, then there should be a subobject $E'\hookrightarrow E$ with $ch_0(E')=ch_0(E)=0$ such that 
$$
\mu_{-3/2,t}(E')=\mu_{-3/2,t}(E)\ \ \Leftrightarrow\ \ \frac{\chi(E')}{ch_1(E')}=\frac{\chi(E)}{ch_1(E)}=0.
$$
Thus, $\chi(E')=\chi(E)=0$. Moreover, $ch_1(E')<ch_1(E)=d$ since otherwise the quotient $E/E'$ in $\mathcal{A}_{-3/2}$ would be mapped to 0 by $Z_{-3/2,t}$. Let $K=\ker(E'\twoheadrightarrow B)$ and $ch_1(E')=d-h$, then we have a diagram
$$
\begin{diagram}
\node{K}\arrow{s,J}\arrow{e,J}\node{E'}\arrow{s,J}\arrow{se,A}\node{}\\
\node{A}\arrow{e,J}\node{E}\arrow{e,A}\node{B}
\end{diagram}
$$

If $ch_0(B)=r',\ ch_1(B)=c'$, and $ch_2(B)=\delta'$, then 
\begin{align*}
ch(K)&=(ch_0(K),ch_1(K),ch_2(K))=(-r',d-h-c',-\frac{3}{2}(d-h)-\delta'),\\
ch(A)&=(ch_0(A),ch_1(A),ch_2(A))=(-r',d-c',-\frac{3}{2}d-\delta').
\end{align*}

Note that in this case $\mathfrak{R}(Z_{-3/2,t}(K))=\mathfrak{R}(Z_{-3/2,t}(A))$ and $\mathfrak{I}(Z_{-3/2,t}(K))=\mathfrak{I}(Z_{-3/2,t}(A))-ht$. But $A$ is Bridgeland stable at $W$ and so it is Bridgeland stable for $t$ sufficiently near $W$, therefore
$$
\mu_{-3/2,t}(K)<\mu_{-3/2,t}(A)
$$
for $t$ above and below $W$. This implies that $-h\mathfrak{R}(Z_{-3/2,t}(A))<0$ above and below $W$ and so $h=0$. Thus $E$ is Bridgeland stable.
\end{proof}
\subsection{Rank one walls}
Setting $ch_0(F)=1$ in (\ref{higher rank}) one finds the set of admissible values for the Euler characteristic of a destabilizing object producing a wall corresponding to a flip:
$$
\chi(F)=2,\dots,\frac{d^2-1}{8}.
$$ 

The possible values for the first Chern class come from solving the inequality (\ref{bound1}). Assume $\chi(F)=\frac{d^2-1}{8}-\ell$, for $0\leq \ell\leq \frac{d^2-1}{8}-2$, then 
\begin{equation}\label{inequalityrank1}
\sqrt{\frac{d^2}{4}-2\ell}-\frac{3}{2}\leq ch_1(F)\leq -\frac{3}{2}+d-\sqrt{\frac{d^2}{4}-2\ell}.
\end{equation}

It is easy to check that $ ch_1(F)=(d-3)/2$ is always a solution. These are the invariants of a twisted ideal sheaf $\mathcal{I}_Z((d-3)/2)$ of a 0-dimensional subscheme $Z$ of length $\ell$. 

Now, for a generic 0-dimensional subschemes $Z$ of length $\ell$, Gaeta's theorem states that if we write
$$
\ell=\frac{r(r+1)}{2}+s,\ \ 0\leq s\leq r,
$$ 
then $\mathcal{I}_{Z}$ has a free resolution of the form
$$
0\rightarrow \mathcal{O}(-r-1)^{\oplus(r-2s)}\oplus \mathcal{O}(-r-2)^{\oplus s}\rightarrow \mathcal{O}(-r)^{\oplus(r-s+1)}\rightarrow \mathcal{I}_Z\rightarrow 0
$$
if $r\geq 2s$, or
$$
0\rightarrow  \mathcal{O}(-r-2)^{\oplus s}\rightarrow \mathcal{O}(-r)^{\oplus(r-s+1)}\oplus \mathcal{O}(-r-1)^{\oplus (2s-r)}\rightarrow \mathcal{I}_Z\rightarrow 0
$$
if $r\leq 2s$. 

For $ 0\leq \ell\leq \frac{d^2-1}{8}-2$, a simple computation shows that $d-2r>0$, and $d-2r-2>0$ when $r\leq 2s$. Therefore, for generic 0-dimensional subschemes $Z,W\subset \P^2$ of length $\ell$ we have $\Hom(\mathcal{O},\mathcal{I}_W\otimes^{L}\mathcal{I}_Z(d))\neq 0$. Thus,
\begin{align*}
\Ext(1,\mathcal{I}_W^{\vee}\left((-d-3)/2\right)[1],\mathcal{I}_Z\left((d-3)/2\right))&=\Hom(\mathcal{I}_W^{\vee}\left((-d-3)/2\right),\mathcal{I}_Z\left((d-3)/2\right))\\
&=\Hom(\mathcal{O},\mathcal{I}_W\left((d+3)/2\right)\otimes^{L}\mathcal{I}_Z\left((d-3)/2\right))\\
&=\Hom(\mathcal{O},\mathcal{I}_W\otimes^{L}\mathcal{I}_Z(d))\neq 0.
\end{align*}
Since $\mathcal{I}_W^{\vee}\left((-d-3)/2\right)[1]\in\mathcal{A}_{-3/2}$ by Theorem \ref{duality} with $t\gg 0$, then there are nontrivial extensions
$$
0\rightarrow \mathcal{I}_Z\left((d-3)/2\right)\rightarrow E\rightarrow \mathcal{I}_W^{\vee}\left((-d-3)/2\right)[1]\rightarrow 0
$$
producing sheaves  $E\in \mathcal{A}_{-3/2}$. The generic extension is Bridgeland stable above the wall determined by  $\mathcal{I}_Z\left((d-3)/2\right)$, i.e., for 
$$
t>\sqrt{\frac{d^2}{4}-2\ell}.
$$ 
This proves that the number of actual rank 1 walls corresponding to flips is $\displaystyle \frac{d^2-9}{8}$. 

Notice that the exceptional loci for a rank 1 flip is not irreducible in general. Indeed, the inequality (\ref{inequalityrank1}) has unique solution only when $\displaystyle \ell<\frac{d-1}{2}$. However, setting
$$
\mathcal{G}^{W}_{\ell,i}:=\mathcal{I}_{W}\left(\frac{d-3}{2}+i\right),\ \ \ \ \ \text{length}(W)=\ell+\frac{i(d+i)}{2},\ \ \ i\in\Z
$$
we have
\begin{proposition}\label{prop for remark}
If $c=\frac{d-3}{2}+i$ is solution for (\ref{inequalityrank1}) then so is $c=\frac{d-3}{2}-i$. Generically, the corresponding destabilizing objects are of the form $\mathcal{G}^{W}_{\ell,i}$ and $\mathcal{G}^{Y}_{\ell,-i}$ respectively. Moreover, if $E_{\ell,k}$ denotes the component containing the locus of sheaves destabilized by an object of the form $\mathcal{G}^{W}_{\ell,k}$ then $E_{\ell,-k}$ is the image of $E_{\ell,k}$ by the duality automorphism. 
\end{proposition}
\begin{proof}
The first part is a trivial computation. For the second one note that a generic destabilizing sequence is of the form
$$
0\rightarrow \mathcal{G}^{W}_{\ell,i}\rightarrow E\rightarrow (\mathcal{G}^{Y}_{\ell,-i})^{D}\rightarrow 0
$$
which is again a trivial computation of the invariants.
\end{proof}

\begin{remark}\label{hilbertwalls}
It follows from \cite[Sections 9 \& 10]{ABCH} that the outermost wall for the Chern character $w=(1,(d-3)/2,(d-3)^2/8-\ell)$ is produced by the inclusions 
$$
\mathcal{O}((d-5)/2)\hookrightarrow \mathcal{I}_Z((d-3)/2),
$$
destabilizing twisted ideal sheaves of 0-dimensional subschemes $Z\subset \mathbb{P}^2$ of length $\ell$ supported on a line. This wall is a semicircle with center $\left(\frac{d-5}{2}-\ell,0\right)$ and radius $\ell-\frac{1}{2}$. In the outermost chamber, the only Bridgeland semistable (actually stable) objects of Chern character $w$ are the twisted ideal sheaves $\mathcal{I}_Z((d-3)/2)$, and the corresponding Bridgeland moduli space is isomorphic to $\Hilb^{\ell}(\mathbb{P}^2)$. 
\end{remark} 
\begin{remark}\label{hilbertwalls2}
With the notation of Remark \ref{hilbertwalls}. Notice that the right intercept of a wall $W$ of radius $R$ for the Chern character $v$ and the $s$-axis is $-\frac{3}{2}+R$, and the right intercept of the outermost wall for the  Chern character $w$ and the $s$-axis is $\frac{d-5}{2}$. Therefore, it follows from Remark \ref{hilbertwalls}  that as long as $R>\frac{d}{2}-1$, the only Bridgeland semistable objects of Chern character $w$ along $W$ are the twisted ideal sheaves $\mathcal{I}_Z((d-3)/2)$, where $Z\subset\mathbb{P}^2$ is a 0-dimensional subscheme of length $\ell$.
\end{remark}

\begin{remark}\label{hilbertwalls3} 
Let $W$ be a wall of radius $R$ for the Chern character $v$ produced by a destabilizing subobject $A\rightarrow E$ (here $E$ is a sheaf with $ch(E)=v$). Because $R\leq \frac{d}{2ch_0(A)}$ and $d\geq 5$, then each wall of radius $R>\frac{d-2}{2}$ must be a rank 1 wall. Moreover, since inequality \eqref{inequalityrank1} has only one solution when 
$$
R>\frac{d-2}{2},
$$
then the walls of radius $R>\frac{d-2}{2}$ are produced by subobjects $A$ with Chern character 
$$
ch(A)=\left(1,\frac{d-3}{2},\frac{(d-3)^2}{8}-\ell\right),\ \ 0\leq \ell<\frac{d-1}{2}.
$$
Since $A$ is Bridgeland semistable along $W$, then by Remark \ref{hilbertwalls2} we must have $A=\mathcal{I}_Z((d-3)/2)$ for some 0-dimensional subscheme $Z\subset \mathbb{P}^2$ of length $\ell$.
\end{remark}


\section{The Embedded Problem: Flips of Secant Varieties.} 

In \cite{V1} and \cite{V2}, Vermeire describes a sequence of flips for the secant varieties of an embedding $X\hookrightarrow \P^N$ of an algebraic surface. This sequence of flips is constructed in similar fashion to the flips obtained by Thaddeus \cite{THA} when studying variation of GIT for moduli spaces of stable pairs on curves. The first of these flips is easy to describe and it is the content of \cite[Theorem 4.12]{V1}. Roughly speaking, if the embedding of $X$ is sufficiently ample such that it can be generated by quadrics with only linear syzygies then there is a flip diagram
$$
\begin{diagram}
\node{}\node{\tilde{M}}\arrow{sw,t}{\pi}\arrow{se,t}{h}\node{}\\
\node{\bl_X(\P^N)}\arrow{s}\arrow{se,t}{\varphi^+}\node{}\node{M}\arrow{sw,b}{\varphi^-}\\
\node{\P^N}\arrow{e,b,..}{\varphi}\node{\P^s}\node{}
\end{diagram}
$$ 
where $\varphi :\P^N\dashrightarrow \P^s$ is the rational map given by the forms defining $X$ and $\tilde{M}$ is the blow-up of $\bl_X(\P^N)$ along the strict transform of the secant variety $\widetilde{Sec X}$. The diagram restricts to 
$$
\begin{diagram}
\node{}\node{E}\arrow{sw,t}{\pi}\arrow{se,t}{h}\node{}\\
\node{\P(\mathcal{E})}\arrow{se,b}{\varphi^+}\node{}\node{\P(\mathcal{F})}\arrow{sw,b}{\varphi^-}\\
\node{}\node{\Hilb^2(X)}\node{}
\end{diagram}
$$ 
where $\P(\mathcal{E})\cong \widetilde{Sec X}$ and $\mathcal{F}= {\varphi^+}_*(N^*_{\P(\mathcal{E})/\bl_X(\P^N)}\otimes \mathcal{O}_{\P(\mathcal{E})}(-1))$.

We will see that in the case of the $(d-3)$-uple embedding of $\P^2$, $X=\nu_{d-3}(\P^2)$, such flips appear naturally when running the MMP for the Gieseker moduli $M_H(0,d,-3d/2)$ for $d$ odd (or $M_H(0,d,0)$ for $d$ even).

From Remark \ref{hilbertwalls3} we know that the first $(d-1)/2$ walls of type $v=(0,d,-3d/2)$ ($d$ odd) correspond to flips and are produced by destabilizing subobjects of the form 
$$
\mathcal{G}^{Z}_{\ell,0}=\mathcal{I}_Z((d-3)/2),\ \ \text{length}(Z)=\ell,\ \ 0\leq \ell<\frac{d-1}{2}.
$$
The radius of the wall $W_{ch(\mathcal{G}^{Z}_{\ell,0}),v}$ is 
$$
R_{\ell}=\sqrt{\frac{d^2}{4}-2\ell}.
$$
We denote by $M_{\ell}$ the moduli space of $\sigma_{-3/2,R_{\ell}}$-semistable objects of topological type $v$, and by $M_{\ell}^{\pm}$ the moduli spaces of  $\sigma_{-3/2,R_{\ell}\pm\epsilon}$-semistable objects of topological type $v$ for $0<\epsilon\ll 1$. Thus, for instance, $M_{\ell}^-=M_{\ell+1}^+$. Also, denote by $E_{\ell}^{\pm}$ the exceptional loci for the contractions 
$$
\pi_{\ell}^{\pm}\colon M_{\ell}^{\pm}\rightarrow M_{\ell}.
$$

Thus we obtain the exceptional loci for the first flip of $M_H(0,d,-3d/2)$ ($d\geq 5$ odd):
\begin{eqnarray*}
E_0^+ &:& 0\rightarrow \mathcal{O}\left((d-3)/2\right)\rightarrow F\rightarrow \mathcal{O}\left((-d-3)/2\right)[1]\rightarrow 0 \\
E_0^- &:& 0\rightarrow \mathcal{O}\left((-d-3)/2\right)[1] \rightarrow G^{\bullet}\rightarrow \mathcal{O}\left((d-3)/2\right) \rightarrow0,
\end{eqnarray*}
these are obtained from the set-theoretic wall-crossing since the objects $\mathcal{O}\left((d-3)/2\right)$ and $\mathcal{O}\left((-d-3)/2\right)[1]$ are $\sigma_{-3/2,t}$-stable for every value of $t$, which follows from \cite[Proposition 6.2]{ABCH} and Theorem \ref{duality}. $E_0^+$ and $E_0^-$ are projective spaces, indeed: 
\begin{eqnarray*}
E_0^+&\cong& \P\left(\Ext(1,\mathcal{O}\left((-d-3)/2\right)[1], \mathcal{O}\left((d-3)/2\right))\right),\\
E_0^-&\cong& \P\left(\Ext(1,\mathcal{O}\left((d-3)/2\right),\mathcal{O}\left((-d-3)/2\right)[1])\right)\\
&=& \P\left(\Ext(2,\mathcal{O}\left((d-3)/2\right),\mathcal{O}\left((-d-3)/2\right))\right)\\
&=& \P(H^2(\P^2,\mathcal{O}(-d)))\\
&\cong& \P(H^0(\P^2,\mathcal{O}(d-3))^{\vee}).
\end{eqnarray*}
There is a natural $\P^2$ embedded in $E_0^-$ by the complete linear series $\nu_{d-3}\colon \P^2\rightarrow E_0^-$. The Veronese surface $X:=\nu_{d-3}(\P^2)$ can be described in terms of extensions, it is the set of complexes $G^{\bullet}$ fitting into a commutative diagram 
\begin{equation}\label{diagram for X}
\begin{diagram}\dgARROWLENGTH=1.7em
\node{}\node{}\node{\mathcal{I}_p\left((d-3)/2\right)}\arrow{sw}\arrow{s,J}\\
\node{\mathcal{O}\left((-d-3)/2\right)[1]}\arrow{e,J}\arrow{s,=}\node{G^{\bullet}_p}\arrow{e,A}\arrow{s}\node{\mathcal{O}\left((d-3)/2\right)}\arrow{s,A}\\
\node{\mathcal{O}\left((-d-3)/2\right)[1]}\arrow{e,J}\node{\mathcal{G}}\arrow{e,A}\node{\C_p}\\
\end{diagram}
\end{equation}
Note that $\mathcal{G}$ is unique (up to scalars) since $\ext(1,\C_p,\mathcal{O}\left((-d-3)/2\right)[1])=1$. Thus $G^{\bullet}_p$ is the image under the pullback homomorphism 
$$
\Ext(1,\C_p,\mathcal{O}\left((-d-3)/2\right)[1])\hookrightarrow \Ext(1,\mathcal{O}\left((d-3)/2\right),\mathcal{O}\left((-d-3)/2\right)[1]).
$$
But we know that 
\begin{align*}
\Ext(1,\C_p,\mathcal{O}\left((-d-3)/2\right)[1]) &\cong \Ext(1,(\mathcal{O}\left((-d-3)/2\right)[1])^D,(\C_p)^D)\\
&=\Ext(1,\mathcal{O}\left((d-3)/2\right),(\C_p)^D),
\end{align*}
so $G^{\bullet}_p$ is also the image under the push-forward map
$$
\Ext(1,\mathcal{O}\left((d-3)/2\right),(\C_p)^D)\hookrightarrow \Ext(1,\mathcal{O}\left((d-3)/2\right),\mathcal{O}\left((-d-3)/2\right)[1]).
$$

Applying the functor $(\_)^D$ to the pullback diagram above gives us the push-forward diagram
$$
\begin{diagram}\dgARROWLENGTH=1.7em
\node{\C_p^D}\arrow{e,J}\arrow{s,J}\node{\mathcal{G}^D}\arrow{s}\arrow{e,A}\node{\mathcal{O}\left((d-3)/2\right)}\arrow{s,=}\\
\node{\mathcal{O}\left((-d-3)/2\right)[1]}\arrow{e,J}\arrow{s,A}\node{(G^{\bullet}_p)^D}\arrow{e,A}\arrow{sw}\node{\mathcal{O}\left((d-3)/2\right)}\\
\node{\left(\mathcal{I}_p((d-3)/2)\right)^D}
\end{diagram}
$$
\begin{proposition}
The elements of $E_0^-$ are fixed by the duality automorphism.
\end{proposition}
\begin{proof} From the discussion above we know that $G^{\bullet}_p=(G^{\bullet}_p)^D$, and so the duality automorphism which restricts to an automorphism $(\_)^D|_{E_0^-}:E_0^-\rightarrow E_0^-$ fixes $X$. Since every automorphism of $E_0^-\cong\P^N$ is linear, then $(\_)^D|_{E_0^-}$ is the identity. 
\end{proof}

The exceptional loci for the second flip are
\begin{eqnarray*}
E_1^+ &:& 0\rightarrow \mathcal{I}_p((d-3)/2)\rightarrow F\rightarrow\mathcal{I}_q^{\vee}((-d-3)/2)[1]\rightarrow 0;\ \ \ p,q\in\P^2 \\
E_1^- &:& 0\rightarrow \mathcal{I}_q^{\vee}((-d-3)/2)[1]\rightarrow G^{\bullet}\rightarrow  \mathcal{I}_p((d-3)/2)\rightarrow 0;\ \ \ p,q\in\P^2. 
\end{eqnarray*} 

This description of $E_1^-$ is given by Proposition \ref{MMP3} since by Remark \ref{hilbertwalls3} and Theorem \ref{duality} we know that both $\mathcal{I}_p((d-3)/2)$ and $\mathcal{I}_q^{\vee}((-d-3)/2)[1]$ are Bridgeland stable along the wall $W_{ch(\mathcal{I}_p((d-3)/2)),v}$.

The following vanishing theorem will be used repeatedly for the rest of the section:
\begin{theorem}[Bertram-Ein-Lazarsfeld, \cite{BEL}] Assume that $X\subset \P^r$ is (scheme-theoretically) cut out by hypersurfaces of degrees $d_1\geq d_2\geq\cdots\geq d_m$. Then 
$$
H^i(\P^r,\mathcal{I}_X^a(k))=0\ \ \ \text{for all}\ \ i\geq 1
$$  
provided that $k\geq ad_1+d_2+\cdots+d_e-r$, where $e=codim(X,\P^r)$.
\end{theorem}
\begin{lemma}\label{common blowup} The fiber product $M_1^+\times_{M_1}M_1^-$ is isomorphic to the common blow-up $\bl_{E_1^+}M_1^+= \bl_{E_1^-}M_1^-$.
\end{lemma}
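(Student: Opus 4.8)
The plan is to identify both $M_1^+\times_{M_1}M_1^-$ and the two blow-ups with the parameter space of triangles $A^+\hookrightarrow E\twoheadrightarrow B^+$ over the wall $W_1$, and to show all three receive compatible birational morphisms that are isomorphisms away from the exceptional loci. First I would set up notation: write $v=(0,d,-3d/2)$, let $W_1$ be the second wall (the one whose exceptional loci are $E_1^\pm$), and let $M_1^\pm$ be the Bridgeland moduli spaces $M_{-3/2,t^\pm H}(v)$ just above and just below $W_1$. By Corollary \ref{wall-crossing}, the only objects whose stability changes across $W_1$ are the extensions of $\mathcal{I}_q^\vee((-d-3)/2)[1]$ by $\mathcal{I}_p((d-3)/2)$ (above) and of $\mathcal{I}_p((d-3)/2)$ by $\mathcal{I}_q^\vee((-d-3)/2)[1]$ (below), since the Bridgeland moduli for the Hilbert scheme of one point is the constant $\P^2$. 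Thus $\pi_1^\pm\colon M_1^\pm\to M_1$ are isomorphisms on the complements of $E_1^\pm$, and $\pi_1^+$ collapses the $\mathbb{P}$-bundle $E_1^+\to\P^2\times\P^2$ onto its base inside $M_1$, similarly for $\pi_1^-$.

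Next I would show the fiber product is the universal object carrying \emph{both} a destabilizing subobject $A^+$ and a destabilizing quotient $B^+$. A point of $M_1^+\times_{M_1}M_1^-$ lying over a point of $M_1\setminus(\text{base of }E_1^\pm)$ is a single point; over a point of the base $\P^2\times\P^2$ it is the fiber of $E_1^+$ times the fiber of $E_1^-$, i.e.\ $\mathbb{P}(\Ext(1,\mathcal{I}_q^\vee((-d-3)/2)[1],\mathcal{I}_p((d-3)/2)))\times\mathbb{P}(\Ext(1,\mathcal{I}_p((d-3)/2),\mathcal{I}_q^\vee((-d-3)/2)[1]))$. But by Serre duality on the surface these two $\Ext$-spaces are dual to each other (exactly the computation in part (a) of the preceding proposition), so this product of projective spaces is $\mathbb{P}(V_{p,q})\times\mathbb{P}(V_{p,q}^\vee)$. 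The blow-up $\bl_{E_1^+}M_1^+$ has exceptional divisor the projectivized normal bundle of $E_1^+$ in $M_1^+$; since $E_1^+$ is a $\mathbb{P}(V)$-bundle contracted by $\pi_1^+$, a local Picard-number / normal-bundle computation (the normal directions to $E_1^+$ are precisely the $\Ext^1$ in the opposite direction, i.e.\ $V^\vee$) identifies this exceptional divisor with the same $\mathbb{P}(V)\times\mathbb{P}(V^\vee)$-bundle over $\P^2\times\P^2$, and symmetrically for $\bl_{E_1^-}M_1^-$.

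Then I would assemble the maps: the universal families on $M_1^+$ and $M_1^-$ pull back to $M_1^+\times_{M_1}M_1^-$, and the canonical destabilizing sub/quotient morphisms extend (by flatness and the vanishing of the relevant $\Hom$'s checked in the previous proposition, e.g.\ $\hom(\mathcal{I}_p((d-3)/2),\mathcal{O}((-d-3)/2)[1])=0$ type computations) over all of the fiber product, producing a morphism $M_1^+\times_{M_1}M_1^-\to\bl_{E_1^+}M_1^+$ by the universal property of blow-up (the ideal of $E_1^+$ pulls back to an invertible ideal because the extra data of a quotient trivializes it), and symmetrically a morphism to $\bl_{E_1^-}M_1^-$. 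Each is an isomorphism over the big open set and an isomorphism on exceptional fibers by the matching $\mathbb{P}(V)\times\mathbb{P}(V^\vee)$ description, hence an isomorphism by Zariski's main theorem (everything in sight is smooth along these loci, or at worst normal, since $M_1^\pm$ are birational models of a Mori dream space and the exceptional loci are smooth projective bundles).

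The main obstacle I expect is the normal-bundle computation showing that the blow-up $\bl_{E_1^+}M_1^+$ has exceptional divisor exactly the $\mathbb{P}(V^\vee)$-bundle dual to the fibers of $E_1^+$ — that is, verifying that $N_{E_1^+/M_1^+}$ restricted to a fiber of $E_1^+\to\P^2\times\P^2$ is the expected twist of the dual $\Ext$-space. This is essentially the deformation-theoretic heart of wall-crossing (the ``Mukai flop''-type local model), and it is where one must use that crossing $W_1$ is a flip rather than a divisorial contraction, together with the explicit extension groups computed in the previous proposition. Once that local picture is pinned down, the gluing of the three spaces and the final isomorphism are formal.
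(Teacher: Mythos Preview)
Your overall architecture---reduce the lemma to identifying the projectivized normal bundle $\P(N_{E_1^+/M_1^+})$ with the fiber product $E_1^+\times_{\P^2\times\P^2}E_1^-$---is exactly the paper's, and you correctly flag that normal-bundle identification as the crux. However, there is one concrete error and one real gap.

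\textbf{The error.} Your claim that the two $\Ext^1$ groups are Serre-dual is false. From the preceding proposition,
\[
\Ext(1,\mathcal{I}_q^{\vee}((-d-3)/2)[1],\mathcal{I}_p((d-3)/2))\cong H^0(\mathcal{I}_p\otimes\mathcal{I}_q(d)),
\qquad
\Ext(1,\mathcal{I}_p((d-3)/2),\mathcal{I}_q^{\vee}((-d-3)/2)[1])\cong H^0(\mathcal{I}_p\otimes\mathcal{I}_q(d-3))^{\vee},
\]
and these have \emph{different} dimensions. So the fibers of $E_1^+$ and $E_1^-$ are $\P(V_1)$ and $\P(V_2)$ with $V_1\not\cong V_2^{\vee}$; this is a genuine flip, not a Mukai flop. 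Fortunately nothing in the argument actually requires duality---only that the exceptional divisor of the blow-up is $\P(V_1)\times\P(V_2)$ over the base---so the error is repairable, but your invocation of Serre duality should be deleted.

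\textbf{The gap.} You defer the normal-bundle computation, but this is precisely what the paper carries out, and it does so by a direct $\Ext$ calculation rather than by a ``local Mukai-flop model''. Concretely, the paper first checks the five vanishings
\[
\Hom(A,B)=\Ext(2,B,B)=\Ext(2,A,A)=\Ext(2,F,A)=\Ext(2,B,F)=0
\]
(writing $A=\mathcal{I}_p((d-3)/2)$, $B=\mathcal{I}_q^{\vee}((-d-3)/2)[1]$), using Serre duality, Bertram--Ein--Lazarsfeld vanishing, and Bridgeland stability of $F$. These vanishings force the long exact sequences obtained by applying $\Hom(F,-)$ and $\Hom(-,F)$ to $0\to A\to F\to B\to 0$ to terminate at the right places, and one reads off a surjection
\[
f\colon\Ext(1,F,F)\twoheadrightarrow\Ext(1,A,B)
\]
whose kernel is identified with the tangent space $(\T E_1^+)_{[F]}$. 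Globalizing gives
\[
0\to \T E_1^+\to \T M_1^+|_{E_1^+}\to (\pi_1^+|_{E_1^+})^*\bigl((\pi_1^-|_{E_1^-})_*\mathcal{O}_{E_1^-}(1)\bigr)\to 0,
\]
so $N_{E_1^+/M_1^+}$ is pulled back from $\P^2\times\P^2$ and $\P(N_{E_1^+/M_1^+})\cong E_1^+\times_{\P^2\times\P^2}E_1^-$. The symmetric computation gives the same space as $\P(N_{E_1^-/M_1^-})$, yielding the fiber square and hence the common blow-up. This replaces your universal-property/Zariski's-main-theorem endgame with a single clean deformation-theory statement; I would recommend you run this $\Ext$ argument explicitly rather than appealing to an unnamed local model.
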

\begin{proof}
A proof of this statement was already given in \cite[Section 6.2.1]{BMW} for the case $d=5$, and it generalizes for all $d$ (odd) without change. One notices the following vanishing
\begin{eqnarray*}
\Hom(\mathcal{I}_p((d-3)/2),\mathcal{I}_q^{\vee}((-d-3)/2)[1])&=&0\\
\Ext(2,\mathcal{I}_q^{\vee}((-d-3)/2)[1],\mathcal{I}_q^{\vee}((-d-3)/2)[1])&=&0\\
\Ext(2,\mathcal{I}_p((d-3)/2),\mathcal{I}_p((d-3)/2))&=&0\\
\Ext(2,F,\mathcal{I}_p((d-3)/2))&=&0\\
\Ext(2,\mathcal{I}_q^{\vee}((-d-3)/2)[1],F)&=&0
\end{eqnarray*} 
for every $p,q\in \P^2$ and $F\in E_1^+$. The first is obvious when $p\neq q$, for $p=q$ one uses Serre duality and Bertram-Ein-Lazarsfeld vanishing. The last two are obtained by using Serre duality and the fact that $F$ is Bridgeland stable, which is a consequence of Proposition \ref{pseudostableisstable}. This allows us to get diagrams
$$
\footnotesize{
\begin{diagram}\dgARROWLENGTH=0.8em
\node{}\node{}\node{0}\arrow{s}\node{}\\
\node{}\node{}\node{\Ext(1,\mathcal{I}_q^{\vee}((-d-3)/2)[1],\mathcal{I}_q^{\vee}((-d-3)/2)[1])}\arrow{s}\node{}\\
\node{\hspace{1cm}\Ext(1,F,\mathcal{I}_p((d-3)/2))}\arrow{e}\node{\Ext(1,F,F)}\arrow{se,b}{f}\arrow{e,A}\node{\Ext(1,F,\mathcal{I}_q^{\vee}((-d-3)/2)[1])}\arrow{s}\node{}\\
\node{}\node{}\node{\Ext(1,\mathcal{I}_p((d-3)/2),\mathcal{I}_q^{\vee}((-d-3)/2)[1])}\arrow{s}\node{}\\
\node{}\node{}\node{0}\node{}
\end{diagram}}
$$
and 
$$
\footnotesize{
\begin{diagram}\dgARROWLENGTH=0.8em
\node{}\node{}\node{0}\arrow{s}\\
\node{}\node{}\node{\Ext(1,\mathcal{I}_p((d-3)/2),\mathcal{I}_p((d-3)/2))}\arrow{s}\\
\node{\hspace{1cm}\Ext(1,\mathcal{I}_q^{\vee}((-d-3)/2)[1],F)}\arrow{e}\node{\Ext(1,F,F)}\arrow{se,b}{f}\arrow{e,A}\node{\Ext(1,\mathcal{I}_p((d-3)/2),F)}\arrow{s}\\
\node{}\node{}\node{\Ext(1,\mathcal{I}_p((d-3)/2),\mathcal{I}_q^{\vee}((-d-3)/2)[1])}\arrow{s}\\
\node{}\node{}\node{0}
\end{diagram}}
$$
Then we get an exact sequence
$$
0\rightarrow \ker f\rightarrow \Ext(1,F,F)\rightarrow\Ext(1,\mathcal{I}_p((d-3)/2),\mathcal{I}_q^{\vee}((-d-3)/2)[1])\rightarrow 0,
$$
and two surjections 
\begin{align*}
& \ker f \twoheadrightarrow \Ext(1,\mathcal{I}_q^{\vee}((-d-3)/2)[1],\mathcal{I}_q^{\vee}((-d-3)/2)[1]),\\
& \ker f \twoheadrightarrow \Ext(1,\mathcal{I}_p((d-3)/2),\mathcal{I}_p((d-3)/2)).
\end{align*}
Since the compositions 
$$
\Ext(1,\mathcal{I}_q^{\vee}((-d-3)/2)[1],\mathcal{I}_p((d-3)/2))\rightarrow \Ext(1,F,\mathcal{I}_p((d-3)/2))\rightarrow \Ext(1,F,F),
$$
and 
$$
\Ext(1,\mathcal{I}_q^{\vee}((-d-3)/2)[1],\mathcal{I}_p((d-3)/2))\rightarrow \Ext(1,\mathcal{I}_q^{\vee}((-d-3)/2)[1],F)\rightarrow \Ext(1,F,F),
$$
coincide, then $\ker f$ fits into an exact sequence
$$
\begin{diagram}\dgARROWLENGTH=1.0em
\node{0}\arrow{s}\\
\node{\C}\arrow{s}\\
\node{\Ext(1,\mathcal{I}_q^{\vee}((-d-3)/2)[1],\mathcal{I}_p((d-3)/2))}\arrow{s}\\
\node{\ker f}\arrow{s}\\
\node{\Ext(1,\mathcal{I}_q^{\vee}((-d-3)/2)[1],\mathcal{I}_q^{\vee}((-d-3)/2)[1])\oplus \Ext(1,\mathcal{I}_p((d-3)/2),\mathcal{I}_p((d-3)/2))}\arrow{s}\\
\node{0}
\end{diagram}
$$
Where we take 
\begin{align*}
\C&\cong \Hom(\mathcal{I}_p((d-3)/2),\mathcal{I}_p((d-3)/2)), \ \text{or} \\
\C&\cong\Hom(\mathcal{I}_q^{\vee}((-d-3)/2)[1],\mathcal{I}_q^{\vee}((-d-3)/2)[1]).
\end{align*}
Thus $\ker f$ can be identified with the tangent space of $E_1^+$ at the point $[F]$ and we get an exact sequence
$$
0\rightarrow (\T E_1^+)_{[F]}\rightarrow (\T M_1^+|_{E_1^+})_{[F]}\rightarrow \Ext(1,\mathcal{I}_p((d-3)/2),\mathcal{I}_q^{\vee}((-d-3)/2)[1])\rightarrow 0.
$$
and therefore an exact sequence of sheaves
$$
0\rightarrow \T E_1^+\rightarrow \T M_1^+|_{E_1^+}\rightarrow (\pi_1^+|_{E_1^+})^*((\pi_1^-|_{E_1^-})_*\mathcal{O}_{E_1^-}(1))\rightarrow 0.
$$
Similarly one gets
$$
0\rightarrow \T E_1^-\rightarrow \T M_1^-|_{E_1^-}\rightarrow (\pi_1^-|_{E_1^-})^*((\pi_1^+|_{E_1^+})_*\mathcal{O}_{E_1^+}(1))\rightarrow 0.
$$
This proves that we have a fiber square
$$
\begin{diagram}
\node{\P(N_{E_1^+/M_1^+})\cong \P(N_{E_1^-/M_1^-})}\arrow{s}\arrow{e}\node{E_1^-}\arrow{s,r}{\pi_1^-|_{E_1^-}}\\
\node{E_1^+}\arrow{e,b}{\pi_1^+|_{E_1^+}}\node{\P^2\times\P^2}
\end{diagram}
$$
which completes the proof.
\end{proof}

\begin{proposition}\begin{enumerate}
\item $E_1^+$ and $E_1^-$ are both projective bundles over $\P^2\times \P^2$.
\item $E_1^+\cap E_0^-=X$.
\item The closure of $E_0^-\setminus X$ in $M_1^-$ is isomorphic to the blow-up of $E_0^-$ along $X$.
\end{enumerate}
\end{proposition}
\begin{proof} For part (a), we only need to verify that $\ext(1,\mathcal{I}_q^{\vee}((-d-3)/2)[1],\mathcal{I}_p((d-3)/2))$ and $\ext(1,\mathcal{I}_p((d-3)/2),\mathcal{I}_q^{\vee}((-d-3)/2)[1])$ are constant as $p,q$ vary, because the rest of the argument follows as in \cite[Proposition 4.2]{AB}. We have
\begin{eqnarray*}
\Ext(1,\mathcal{I}_q^{\vee}((-d-3)/2)[1],\mathcal{I}_p((d-3)/2))&=&\Hom(\mathcal{O},\mathcal{I}_p\otimes \mathcal{I}_q(d)),\\
\Ext(1,\mathcal{I}_p((d-3)/2),\mathcal{I}_q^{\vee}((-d-3)/2)[1])&=& \Ext(2,\mathcal{I}_p((d-3)/2),\mathcal{I}_q^{\vee}((-d-3)/2))\\
&\cong& \Hom(\mathcal{O},\mathcal{I}_p\otimes \mathcal{I}_q(d-3))^{\vee}.
\end{eqnarray*}
Note that we can use ordinary tensor instead of derived tensor, because ideal sheaves have a two-term resolution by locally free sheaves. For $p\neq q$ this follows from standard calculations. For $p=q$ one gets constant dimension because
$$
H^1(\P^2, \mathcal{I}_p^2(k))=0\ \  \text{for}\ \  k>0
$$
which follows for example by Bertram-Ein-Lazarsfeld vanishing.

For part (b), diagram (\ref{diagram for X}) already shows that $X\subset E_1^+\cap E_0^-$ since every $G^{\bullet}_p$ admits an injective map (in $\mathcal{A}_{-3/2}$) from $\mathcal{I}_p((d-3)/2)$. For the other inclusion, note that $\hom(\mathcal{I}_p((d-3)/2),\mathcal{O}((d-3)/2))=1$ and $\hom(\mathcal{I}_p((d-3)/2),\mathcal{O}((-d-3)/2)[1])=0$. Therefore, if $E$ fits into a diagram
$$
\begin{diagram}\dgARROWLENGTH=1.7em
\node{}\node{\mathcal{I}_p((d-3)/2)}\arrow{s,J}\node{}\\
\node{\mathcal{O}\left((-d-3)/2\right)[1]}\arrow{e,J}\node{E}\arrow{e,A}\node{\mathcal{O}\left((d-3)/2\right),}
\end{diagram}
$$
then the composition $\mathcal{I}_p((d-3)/2)\hookrightarrow E\twoheadrightarrow \mathcal{O}((d-3)/2)$ is the natural inclusion and so $E\cong G^{\bullet}_p$.

More can be said: since $E_0^-$ is fixed by the duality automorphism, then $E_1^+$ intersects $E_0^-$ along a section over the diagonal $\Delta\subset \P^2\times\P^2$. Since the morphism $\pi_1^+:M_1^+\rightarrow M_1$ collapses the fibers of $E_1^+$, then $\pi_1|_{E_0^-}:E_0^-\rightarrow M_1$ is a closed immersion. By Lemma \ref{common blowup} we have a diagram
$$
\begin{diagram}\dgARROWLENGTH=1.5em
\node{}\node{\bl_{E_1^+}M_1^+}\arrow{s}\arrow{se}\node{}\\
\node{\bl_{X}E_0^-}\arrow{ne}\arrow{s}\node{M_1^+}\arrow{se}\node{M_1^-}\arrow{s}\\
\node{E_0^-}\arrow{ne,J}\arrow[2]{e,b,J}{\pi_1^+|_{E_0^-}}\node{}\node{M_1}
\end{diagram}
$$
which proves $\bl_{X}E_0^-\subset M_1^-$ completing the proof of part (c).
\end{proof}
We now study the third flip for $d\geq 7$ odd.  Since $2<\frac{d-1}{2}$, Remark \ref{hilbertwalls3} with $\ell=2$ and Proposition \ref{prop for remark} imply that the exceptional loci are:
\begin{eqnarray*}
E_2^+&:& 0\rightarrow \mathcal{I}_Z((d-3)/2)\rightarrow F \rightarrow\mathcal{I}_W^{\vee}((-d-3)/2)[1]\rightarrow 0\ \ \ |Z|=|W|=2\\
E_2^-&:& 0\rightarrow \mathcal{I}_W^{\vee}((-d-3)/2)[1]\rightarrow G \rightarrow \mathcal{I}_Z((d-3)/2) \rightarrow 0\ \ \ |Z|=|W|=2.
\end{eqnarray*}
Again, Bertram-Ein-Lazarsfeld vanishing exposes $E_2^+$ and $E_2^-$ as projective bundles over $\Hilb^2(\P^2)\times \Hilb^2(\P^2)$. 

Our plan is to study the restriction of the directed MMP for $M(0,d,-3d/2)$ to $E_0^-$. It is convenient to fix some notation. Inductively define $Y_1^+:=E_0^-$, $Y_i^-$ is the closure of the image of $Y_i^+$ by the rational map $M_i^+\dashrightarrow M_i^-$ and $Y^+_{i+1}:=Y_i^-$. Then, for instance, $Y_1^-=Y_2^+=\bl_XE_0^-$.
\begin{proposition} $E_2^+$ intersects $Y_2^+$ along the strict transform of the secant variety $\widetilde{Sec X}$ which is a projective bundle over $\Hilb^2(\P^2)$.
\end{proposition}
\begin{proof}
The computation is very similar to the one we did when computing $E_1^+\cap E_0^-$. Let $Z=p+q$ where $p,q\in \P^2$ and $p\neq q$. We have a pullback diagram
\begin{equation}\label{diagram for X2}
\begin{diagram}\dgARROWLENGTH=1.7em
\node{}\node{}\node{\mathcal{I}_Z\left((d-3)/2\right)}\arrow{sw}\arrow{s,J}\\
\node{\mathcal{O}\left((-d-3)/2\right)[1]}\arrow{e,J}\arrow{s,=}\node{G^{\bullet}_Z}\arrow{e,A}\arrow{s}\node{\mathcal{O}\left((d-3)/2\right)}\arrow{s,A}\\
\node{\mathcal{O}\left((-d-3)/2\right)[1]}\arrow{e,J}\node{\mathcal{G}}\arrow{e,A}\node{\C_Z}\\
\end{diagram}
\end{equation}
The difference here is that $\ext(1,\C_Z,\mathcal{O}((-d-3)/2)[1])=2$ which corresponds to the line passing through $p$ and $q$ removing $p$ and $q$. Thus, the intersection of $E_2^+\setminus E_1^-$ with $E_0^-\setminus X$ is $Sec X\setminus X$, which proves the first claim. 

A problem arises when considering $Z=2p$, because in this case we have $\ext(1,\C_Z,\mathcal{O}((-d-3)/2)[1])=3$. But since in $M_2^+$ we already flipped $E_1^+$ then not all the complexes $G_Z$ obtained this way are Bridgeland stable. Instead, the complexes $G_{2p}$ fitting into a commutative diagram
\begin{equation}
\begin{diagram}\dgARROWLENGTH=1.7em
\node{}\node{}\node{\mathcal{I}_{2p}\left((d-3)/2\right)}\arrow{sw}\arrow{s,J}\\
\node{\mathcal{I}_p((-d-3)/2)[1]}\arrow{e,J}\arrow{s,=}\node{G_{2p}}\arrow{e,A}\arrow{s}\node{\mathcal{I}_p((d-3)/2)}\arrow{s,A}\\
\node{\mathcal{I}_p((-d-3)/2)[1]}\arrow{e,J}\node{\mathcal{G}}\arrow{e,A}\node{\C_p}\\
\end{diagram}
\end{equation}
are Bridgeland stable. The objects $G_{2p}$ form the fiber of $\widetilde{Sec X}$ over $Z=2p$.
\end{proof}
\begin{lemma}\label{common blowup2}The fiber product $M_2^+\times_{M_2}M_2^-$ is isomorphic to the common blow-up $\bl_{E_2^+}M_2^+= \bl_{E_2^-}M_2^-$.
\end{lemma}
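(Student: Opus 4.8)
The plan is to run the argument of Lemma~\ref{common blowup} one wall further out, with the rank-one objects $\mathcal{I}_p((d-3)/2)$ and $\mathcal{I}_q^{\vee}((-d-3)/2)[1]$ of the second flip replaced by their length-two analogues $\mathcal{I}_Z((d-3)/2)$ and $\mathcal{I}_W^{\vee}((-d-3)/2)[1]$. As there, the whole statement reduces to producing a commutative fiber square
$$
\begin{diagram}
\node{\P(N_{E_2^+/M_2^+})\cong \P(N_{E_2^-/M_2^-})}\arrow{s}\arrow{e}\node{E_2^-}\arrow{s,r}{\pi_2^-|_{E_2^-}}\\
\node{E_2^+}\arrow{e,b}{\pi_2^+|_{E_2^+}}\node{\Hilb^2(\P^2)\times\Hilb^2(\P^2)}
\end{diagram}
$$
Indeed, once the projectivized normal bundles of $E_2^+\subset M_2^+$ and of $E_2^-\subset M_2^-$ are canonically identified over the common base $\Hilb^2(\P^2)\times\Hilb^2(\P^2)$ of the projective bundles $E_2^{\pm}$, the two blow-ups $\bl_{E_2^+}M_2^+$ and $\bl_{E_2^-}M_2^-$ carry the same exceptional divisor over $M_2$, and the formal argument of \cite{BMW} identifies each of them with $M_2^+\times_{M_2}M_2^-$.

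The two tangent/normal-sheaf sequences behind the square rest on the vanishings, for all $Z,W\in\Hilb^2(\P^2)$ and all $F\in E_2^+$,
\begin{gather*}
\operatorname{Hom}(\mathcal{I}_Z((d-3)/2),\mathcal{I}_W^{\vee}((-d-3)/2)[1])=0,\\
\operatorname{Ext}^2(\mathcal{I}_W^{\vee}((-d-3)/2)[1],\mathcal{I}_W^{\vee}((-d-3)/2)[1])=0,\\
\operatorname{Ext}^2(\mathcal{I}_Z((d-3)/2),\mathcal{I}_Z((d-3)/2))=0,\\
\operatorname{Ext}^2(F,\mathcal{I}_Z((d-3)/2))=0,\\
\operatorname{Ext}^2(\mathcal{I}_W^{\vee}((-d-3)/2)[1],F)=0.
\end{gather*}
The last two I would deduce from Serre duality and the Bridgeland stability of $F$: each is Serre-dual to a $\operatorname{Hom}$-group that a slope (equivalently, phase) comparison at the wall forces to vanish, exactly as for $E_1^{\pm}$ in Lemma~\ref{common blowup}. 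The two self-$\operatorname{Ext}^2$'s are, by Serre duality, dual to spaces of maps from a rank-one torsion-free sheaf (respectively from the two-term complex $\mathcal{I}_W^{\vee}$) to its own twist by $K_{\P^2}=\mathcal{O}(-3)$, and such a map vanishes on the dense open set where the sheaf is a line bundle, hence everywhere.

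The genuinely new point is the first vanishing. It is straightforward when $\operatorname{supp}Z\cap\operatorname{supp}W=\emptyset$, as in the disjoint case of Lemma~\ref{common blowup}; when the supports meet, one argues as there, replacing the derived tensor product by the ordinary one (legitimate because ideal sheaves of length-two subschemes have length-two locally free resolutions) and reducing to the vanishing of $H^1$ of the twists $\mathcal{I}_Z\mathcal{I}_W(d)$ and $\mathcal{I}_Z\mathcal{I}_W(d-3)$. Since $Z$ and $W$ are cut out scheme-theoretically by conics, Bertram--Ein--Lazarsfeld gives $H^1(\P^2,\mathcal{I}_Z\mathcal{I}_W(k))=0$ for $k\ge 4$, which covers both twists precisely when $d\ge 7$. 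Granting the vanishings, the long exact sequences of hyper-$\operatorname{Ext}$ yield
\begin{gather*}
0\to \T E_2^+\to \T M_2^+|_{E_2^+}\to (\pi_2^+|_{E_2^+})^*((\pi_2^-|_{E_2^-})_*\mathcal{O}_{E_2^-}(1))\to 0,\\
0\to \T E_2^-\to \T M_2^-|_{E_2^-}\to (\pi_2^-|_{E_2^-})^*((\pi_2^+|_{E_2^+})_*\mathcal{O}_{E_2^+}(1))\to 0,
\end{gather*}
and hence the fiber square above.

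I expect the main obstacle to be the behaviour over the non-reduced and diagonal strata of $\Hilb^2(\P^2)\times\Hilb^2(\P^2)$ in the first vanishing: when $Z$, $W$ or both degenerate to a fat point, $\mathcal{I}_Z\mathcal{I}_W$ is no longer the ideal sheaf of a reduced scheme, so one must check both that the Bertram--Ein--Lazarsfeld bound is still met there for every odd $d\ge 7$ and that the resulting $\operatorname{Hom}$- and $\operatorname{Ext}^1$-groups keep the constant dimension predicted by the projective-bundle descriptions of $E_2^{\pm}$ --- the same delicacy that, in the preceding proposition, forced one to discard the non-Bridgeland-stable complexes $G_{2p}$ over the locus $Z=2p$. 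Once this constancy holds over the whole base, the rest --- forming the two blow-ups, matching their exceptional projective bundles via the displayed sequences, and identifying the common space with $M_2^+\times_{M_2}M_2^-$ --- is purely formal and goes through verbatim as in the proof of Lemma~\ref{common blowup}.
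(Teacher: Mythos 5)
Your proposal matches the paper's own proof, which simply states that the argument of Lemma \ref{common blowup} carries over to the second wall with the required vanishing again supplied by Bertram--Ein--Lazarsfeld. Your more detailed account (Serre duality plus Bridgeland stability for the higher Ext groups, and the BEL bound handling the overlapping-support and non-reduced cases for $d\geq 7$) is precisely the intended argument, so the approach is essentially the same.
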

\begin{proof}
The proof is similar to the proof of Lemma \ref{common blowup}. The right vanishing is again a consequence of Bertram-Ein-Lazarsfeld vanishing.
\end{proof}
This completes Vermeire's first flip since by restricting the fiber diagram of Lemma \ref{common blowup2} one gets
$$
\begin{diagram}
\node{E}\arrow{e,J}\arrow{s}\node{\bl_{\widetilde{Sec X}}(\bl_{X}E_0^-)}\arrow{e,J}\arrow{s}\node{M_2^+\times_{M_2}M_2^-}\arrow{s}\arrow{e}\node{M_2^-}\arrow{s}\\
\node{\widetilde{SecX}}\arrow{seee}\arrow{e,J}\node{\bl_X(E_0^-)}\arrow{e,J}\node{M_2^+}\arrow{e}\node{M_2}\\
\node{}\node{}\node{}\node{\Hilb^2(\P^2)}\arrow{n,J}
\end{diagram}
$$
\begin{remark} In \cite{V1} it is mentioned that flips of secant varieties are closely related to the geometry of $\Hilb^n(X)$. By Propositions \ref{prop for remark} and \ref{MMP3} and the results of this section, one sees that indeed flips of secant varieties of Veronese surfaces are related to the geometry of $\Hilb^n(\P^2)$, and more precisely to its birational geometry.
\end{remark}  

By using diagrams similar to \eqref{diagram for X} and \eqref{diagram for X2} one sees that every rank-1 wall produces a birational transformation of $E_0^-$ whose exceptional locus contains the strict transform of some higher secant variety of $X$. Indeed, for $\ell <(d-1)/2$ the exceptional locus for the induced birational transformation of $E_0^-$, corresponding to crossing the wall $W_{\ell}$, is the strict transform of $Sec^{\ell-1}X$. For $\ell\geq (d-1)/2$ the exceptional locus is reducible and the middle component $E_{\ell,0}$ intersects $E_0^-$ along the strict transform of $Sec^{\ell-1}X$. The intersection $E_{\ell,-i}\cap E_{\ell,0}\cap E_0^-$ is the locus in $\widetilde{Sec^{\ell-1}X}$ of $(\ell-1)$-dimensional planes passing through $\ell$ different points, $i(d-i)/2$ of them lying on the image by the $(d-3)$-uple embedding of a curve $C\subset \P^2$ of degree $i$. Since $E_0^-$ is fixed by the duality automorphism, this completely describes the loci for the restriction to $E_0^-$ of the MMP on $M_H(0,d,-3d/2)$.  

\subsection{The divisorial contraction}
We want to study what happens to our restricted MMP when crossing the wall $W_{ch(\mathcal{O}),v}$ corresponding to the theta divisor (i.e., the closure of the set of those sheaves that admit at least one nonzero section). From Lemma \ref{lemma dual} it follows that the duality automorphism preserves the wall-crossing, and so the theta divisor is left invariant by duality. Therefore, it corresponds to extensions of the form
$$
0\rightarrow N \rightarrow F\rightarrow \mathcal{O}(-3)[1]\rightarrow 0
$$
where $N$ is an object in $\mathcal{A}_{-3/2}$ of Chern character 
$$
ch(N)=\left(1, d-3, \frac{9}{2}-\frac{3d}{2}\right)=\left(1, d-3, \frac{(d-3)^2}{2}-\frac{d(d-3)}{2}\right).
$$
At the wall $W_{ch(\mathcal{O}),v}=W_{ch(N),v}$, $N$ must be Bridgeland semistable. The moduli space of Bridgeland semistable objects at the wall $W_{ch(N),v}$ of Chern character $ch(N)$ is birational to the Hilbert scheme parametrizing (twisted) ideal sheaves $\mathcal{I}_Z(d-3)$ of 0-dimensional subschemes $Z\subset \P^2$ of length $n=d(d-3)/2$.  
  
\begin{remark}
One can originally think of the dual extensions but this version allows us to compute the intersection with the first flipped locus more effectively.
\end{remark}

The intersection of the theta divisor with $E_0^-$ corresponds to the extensions $F$ fitting into the push-forward diagrams
$$
\begin{diagram}
\node{\mathcal{O}_C(-3)}\arrow{e,J}\arrow{s,J}\node{G}\arrow{e,A}\arrow{s}\node{\mathcal{O}((d-3)/2)}\arrow{s,=}\\
\node{\mathcal{O}((-d-3)/2)[1]}\arrow{e,J}\arrow{s,A}\node{F}\arrow{sw,A}\arrow{e,A}\node{\mathcal{O}((d-3)/2)}\\
\node{\mathcal{O}(-3)[1]}\node{}\node{}
\end{diagram}
$$ 
where $C\subset \P^2$ is a curve of degree $(d-3)/2$. Indeed, the middle vertical sequence of arrows is exact and $G$ corresponds to those complexes produced when flipping the locus in $\Hilb^n(\P^2)$ of $n$ points on a curve of degree $(d-3)/2$. This intersection is therefore a projective bundle over the Hilbert scheme of plane curves of degree $(d-3)/2$. An example of this situation was already observed in \cite[Section 6]{BMW} for the case $d=5$  where the intersection of the theta divisor with $E_0^-$ was exactly the strict transform of the secant variety of the Veronese surface in $\P^5$. 

Notice that this intersection is not exactly what gets contracted when crossing $W_{ch(\mathcal{O}),v}$ since after several flips we may have replaced some of these objects by new ones. What we know is that because such $F$ is fixed by the duality automorphism then $F$ must have $\mathcal{O}$ as a subobject, and since $\Hom(\mathcal{O},\mathcal{O}(-3)[1])=0$ then the object $G$ above must have $\mathcal{O}$ as a subobject. Therefore crossing $W_{ch(\mathcal{O}),v}$ must introduce objects $E$ fitting into an exact sequence
$$
0\rightarrow \mathcal{O}(-3)[1]\rightarrow E\rightarrow G\rightarrow 0
$$  
with $G$ being $S$-equivalent to $\mathcal{O}\oplus \mathcal{G}$ at the wall $W_{ch(\mathcal{O}),v}$. Notice that $ch(\mathcal{G})=(0,d-3,-3(d-3)/2)$. These new objects $E$ are all strictly Bridgeland semistable, in fact pseudo-stable when $\mathcal{G}$ is pseudo-stable. 
\begin{remark}\label{last birational model} 
Because of the correspondence between Bridgeland wall-crossing and MMP for the moduli spaces $M_H(0,d,-3d/2)$, we know that after the divisorial contraction all the moduli spaces of Bridgeland semistable objects are isomorphic until they get finally contracted at the collapsing wall. This says that to understand the last birational model, it is enough to understand the moduli spaces at the wall $W_{ch(\mathcal{O}),v}$. 
\end{remark}
\begin{remark}
After a more detailed analysis one can further prove that if $\mathcal{G}$ is a sheaf then it must fit into an exact sequence
$$
0\rightarrow \mathcal{O}_C(-3)\rightarrow \mathcal{G}\rightarrow \mathcal{O}_{C}((d-3)/2)\rightarrow 0,
$$
and if $\mathcal{G}$ is a complex then (using the semistability of $\mathcal{G}$ at $W_{ch(\mathcal{O}),v}$) that at least it has to fit into an exact sequence of the form
$$
0\rightarrow A\rightarrow \mathcal{G}\rightarrow A^{D}\rightarrow 0
$$
where $A$ is a semistable object of invariants $\displaystyle ch(A)=\left(0,\frac{d-3}{2},-\frac{(d-3)(d+9)}{8}\right)$.
\end{remark}
\subsection{The last birational model} One could ask what is the moduli space we obtain after the divisorial contraction and what is the strict transform of $E^-_0$. The answer to the first question comes from the identification with the quiver moduli. Values of $(-3/2,t)$ near $W_{ch(\mathcal{O}(-1)),v}$ are all inside the quiver region 
$$
(s-2)^2+t^2<1
$$
corresponding to $k=-1$. Recall from Theorem \ref{quiver}, that for every $(s,t)$ in this region, there is a vector $\mathfrak{a}_{s,t}$ orthogonal to the dimension vector
$$
\mathfrak{n}=B_{-1}v=\begin{bmatrix}
\frac{(-1)(-1-1)}{2} & \frac{-(2(-1)-1)}{2} & 1\\
(-1)(-1-2) & -(2(-1)-2) & 2\\
\frac{(-1-1)(-1-2)}{2} & \frac{-(2(-1)-3)}{2}& 1
\end{bmatrix}
\begin{bmatrix}
0\\
d\\
-3d/2
\end{bmatrix}=
\begin{bmatrix}
0\\
d\\
d
\end{bmatrix}
$$
such that the moduli space of $\sigma_{s,t}$-semistable objects of type $v=(0,d,-3d/2)$ can be identified with the moduli space of complexes
$$
\C^{d}\otimes \mathcal{O}(-2)\rightarrow \C^{d}\otimes \mathcal{O}(-1)
$$
that are semistable for the vector $\mathfrak{a}_{s,t}$. Note that $\mathfrak{a}_{s,t}$ is of the form $(a,-\theta,\theta)$. Since one naturally has the subcomplex 
$$
\begin{diagram}
\node{\C^{d}\otimes \mathcal{O}(-2)}\arrow{e}\node{\C^{d}\otimes \mathcal{O}(-1)}\\
\node{}\node{\C^{d}\otimes \mathcal{O}(-1)}\arrow{n,J}
\end{diagram}
$$
then one must have $(0,0,d)\cdot (a,-\theta,\theta)=\theta\geq 0$ whenever there is a quiver semistable complex of dimension vector $(0,d,d)$ with respect to $\mathfrak{a}_{s,t}$. Moreover, the proof of \cite[Proposition 8.1]{ABCH} shows that above $W_{ch(\mathcal{O}(-1)),v}$ we have $\theta>0$, at $W_{ch(\mathcal{O}(-1)),v}$ we have $\theta=0$, and below $W_{ch(\mathcal{O}(-1)),v}$ we have $\theta<0$. 

\begin{remark}
In \cite{K}, King shows that the GIT quotient
$$
\Hom(W\otimes\mathcal{O}(-2),W^*\otimes\mathcal{O}(-1))//GL(W)\times GL(W^*),
$$
where the action is given by conjugation, is the moduli space of quiver semistable complexes of dimension vector $\mathfrak{n}=(0,d,d)$, with respect to the orthogonal vector $\mathfrak{a}=(a,\theta,\theta)$ . Different choices of the linearization for the GIT quotient correspond to taking $\theta>0$, $\theta=0$, or $\theta<0$.
\end{remark}

Therefore the last model corresponds to the moduli space $N(3,d,d)$ studied in \cite{DM} of morphisms 
$$
W\otimes\mathcal{O}(-2)\rightarrow W^*\otimes\mathcal{O}(-1),
$$
that are GIT semistable with respect to the natural action of $GL(W)\times GL(W^*)$, where $\dim W=d$. The moduli space at $W_{ch(\mathcal{O}(-1)),v}$ is just a point, and below $W_{ch(\mathcal{O}(-1)),v}$ the moduli space is empty, proving our assertion that $W_{ch(\mathcal{O}(-1)),v}$ was the collapsing wall.

In order to understand what is going to be the last birational model of $E_0^-$, let us take a look at the simplest but yet interesting $M_H(0,3,-9/2)$ studied by Le Potier in \cite{LP2}. In \cite[Th\'eor\`eme 4.4 and Lemme 4.5]{LP2}, Le Potier showed that $M_H(0,3,-9/2)$ is the blow up of $N(3,3,3)$ at the complement of the dense open subset of injective morphisms $3\mathcal{O}(-2)\hookrightarrow 3\mathcal{O}(-1)$, this complement consists of a single $GL(3)\times GL(3)$-orbit, which is the orbit of the skew-symmetric matrix 
$$
\begin{pmatrix}0& -z& x\\ z&0&-y\\ -x&y&0\end{pmatrix}.
$$
As a complex, its cohomology is represented in the diagram
$$
\begin{diagram}\dgARROWLENGTH=1.7em
\node{\mathcal{O}(-3)}\arrow{se,J}\node{}\node{}\node{}\node{\mathcal{O}}\\
\node{}\node{3\mathcal{O}(-2)}\arrow[2]{e,t}{\tiny{\begin{pmatrix}0& -z& x\\ z&0&-y\\ -x&y&0\end{pmatrix}}}\arrow{se,A}\node{}\node{3\mathcal{O}(-1)}\arrow{ne,A}\node{}\\
\node{}\node{}\node{\Omega^1}\arrow{ne,J}\node{}\node{}
\end{diagram}
$$
where the diagonal exact sequences are the Euler sequences. 

The example above reflects some general features of the general situation. Note that a complex $W\otimes\mathcal{O}(-2)\rightarrow W^*\otimes\mathcal{O}(-1)$ given by a skew map is fixed by the duality automorphism, since it corresponds to taking the negative transpose of the corresponding matrix. The general skew map will drop rank by 1 everywhere and therefore it must have a kernel and a cokernel that are line bundles. A simple computation of the invariants shows that the kernel should be $\mathcal{O}((-d-3)/2)$ and its cokernel $\mathcal{O}((d-3)/2)$, i.e., as a complex it should fit into an exact sequence
$$
0\rightarrow \mathcal{O}((-d-3)/2)[1]\rightarrow [W\otimes\mathcal{O}(-2)\rightarrow W^*\otimes\mathcal{O}(-1)]\rightarrow \mathcal{O}((d-3)/2)\rightarrow 0
$$  
in $\mathcal{A}_{-3/2}$, giving a point in $E_0^{-}$. Conversely, by Proposition \ref{pseudostableisstable} all the complexes in $E_0^{-}$ are stable rather than pseudo-stable. Any stable complex in the last model for $E_0^{-}$ must be, as $E_0^-$ itself, fixed by the duality automorphism and therefore it must correspond to the orbit of a skew-map $W\otimes\mathcal{O}(-2)\rightarrow W^*\otimes\mathcal{O}(-1)$.

\begin{remark}
For $d=5$ we have four walls (see \cite[Section 6.2]{BMW} for details): the walls produced by the destabilizing objects $\mathcal{O}(1)$ and $\mathcal{I}_p(1)$, $W_{ch(\mathcal{O}),v}$, and $W_{ch(\mathcal{O}(-1)),v}$. At the first two walls the Jordan-H\"older filtrations have length 1 and so the strict transform of $E_0^-$ before the divisorial contraction consists only of stable objects. As above, it can be proven that at the wall $W_{ch(\mathcal{O}),v}$, the divisorial contraction produces objects that are $S$-equivalent to complexes fitting into an exact sequence
$$
0\rightarrow \mathcal{O}(-3)[1]\rightarrow E\rightarrow \mathcal{O}\oplus \mathcal{O}_{\ell}(-3)\oplus\mathcal{O}_{\ell}(1)\rightarrow 0.
$$

In $N(3,5,5)$ these correspond to the $GL(W)\times GL(W^*)$-orbits of matrices
$$
\begin{pmatrix}
0& -z& x& 0 & 0 \\
 z&0&-y& 0 & 0 \\
 -x&y&0& 0 & 0 \\
 0 & 0 & 0&0 &L\\
 0 & 0 & 0&-L&0
\end{pmatrix}
$$
where $L$ is a linear equation defining $\ell$. The $GL(W)\times GL(W^*)$-orbits of these matrices are strictly semistable in $N(3,5,5)$. 
\end{remark}

Now assume that $B$ is a skew-symmetric matrix giving a $GL(W)\times GL(W^*)$-stable orbit. If there are invertible matrices $T,S\in GL(W)$ such that $TBS^t$ is again skew-symmetric then
$$
B=(S^{-1}T)B(T^{-1}S)^t
$$ 
and therefore $S=\lambda T$ for some $\lambda\in \C^*$ since $B$ is stable and so $\Hom(B,B)=\C$. 

Since $GL(W)$ can be embedded via the diagonal $T\mapsto (T,T^t)$ into $GL(W)\times GL(W^*)$ then a skew-symmetric matrix that is $GL(W)\times GL(W^*)$-stable is also $GL(W)$-stable for the diagonal action. Thus we have an injective map 
$$
\{\text{Stable Skew}\ GL(W)\times GL(W^*)-\text{orbits}\}\longrightarrow \wedge^2W\otimes V//GL(W)
$$
where $V=\Hom(\mathcal{O}(-2),\mathcal{O}(-1))$, and the action of $GL(W)$ on $\wedge^2W\otimes V$ is the natural one:
$$
GL(W)\times \wedge^2W\otimes V\rightarrow \wedge^2W\otimes V,\ \ \ (S,B)\mapsto SBS^t.
$$
In the examples above, this map can be extended to the semistable orbits that have a $skew$ representative. In fact, in a personal communication to the author, Aaron Bertram has made the following
\begin{conjecture} The last birational model of $\bl_{X}E_0^-$ is isomorphic to the GIT quotient $\wedge^2W\otimes V//GL(W)$.
\end{conjecture}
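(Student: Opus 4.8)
\noindent\textit{Towards a proof.} Write $Z$ for the last birational model of $\bl_X E_0^-$. The starting point is the identification, already used above, of the last model of the full MMP on $M(0,d,-3d/2)$ with the Drezet--Maican space $N(3,d,d)$ of $GL(W)\times GL(W^*)$-semistable morphisms $\phi\colon W\otimes\mathcal{O}(-2)\to W^*\otimes\mathcal{O}(-1)$ with $\dim W=d$, sitting inside the quiver region $k=-1$ \cite{DM}. Since every flip and the divisorial contraction restrict to $\bl_X E_0^-$, the variety $Z$ is the closure in $N(3,d,d)$ of the image of $E_0^-$, so the first step is to prove cleanly the set-theoretic description
$$
Z=\overline{\{\,[\phi]\in N(3,d,d)\ :\ \phi\ \text{is the orbit of an alternating, }V\text{-valued form on }W\,\}},\qquad V=\Hom(\mathcal{O}(-2),\mathcal{O}(-1)).
$$
For the interior this is the content of the remarks preceding the conjecture: by Proposition \ref{pseudostableisstable} each point of $E_0^-$ is an honest stable complex fixed by $(\_)^D$, hence (as $d$ is odd) it drops rank by one everywhere and fits into $0\to\mathcal{O}((-d-3)/2)[1]\to E\to\mathcal{O}((d-3)/2)\to 0$, which forces a skew presentation; conversely any such complex is duality-fixed. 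The boundary, namely the strictly semistable objects introduced by crossing $W_\Theta$, must be shown to retain skew representatives, generalising the explicit matrix computation carried out for $d=5$.

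Granting this set-theoretic picture, the second step is to produce the morphism. The diagonal embedding $GL(W)\hookrightarrow GL(W)\times GL(W^*)$, $T\mapsto(T,T^{t})$, realises $\wedge^2 W\otimes V$ as a $GL(W)$-stable slice of $\Hom(W,W^*)\otimes V$, and the universal complex over $N(3,d,d)$ restricts to a $GL(W)$-equivariant family over the semistable part of this slice. One then has to pin down the character $\chi$ of $GL(W)$ for which King's $\chi$-(semi)stability on $\wedge^2 W\otimes V$ coincides with the restriction of the stability defining $N(3,d,d)$; with that choice the equivariant family descends, by GIT, to a morphism
$$
\psi\colon \wedge^2 W\otimes V/\!/_{\chi}GL(W)\longrightarrow Z .
$$
That $\psi$ is injective is precisely the Schur-lemma argument in the paragraph above the conjecture: if $T,S\in GL(W)$ carry one $\chi$-stable skew form to another, then skew-symmetry forces $S=\lambda T$, so the $GL(W)$-orbit is recovered from the $GL(W)\times GL(W^*)$-orbit, and the same comparison matches $S$-equivalence classes along the strictly semistable locus.

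The hard part, and the reason this is stated only as a conjecture, is the surjectivity of $\psi$: one must show that a skew form which is $\chi$-semistable for the diagonal $GL(W)$ is automatically $GL(W)\times GL(W^*)$-semistable, hence defines a point of $N(3,d,d)$ lying on $Z$. A one-parameter subgroup of $GL(W)\times GL(W^*)$ destabilising a skew $\phi$ amounts to a pair of filtrations, of $W$ and of $W^*$; the plan is to use the alternating pairing to replace it by its self-dual part, i.e.\ by a filtration of $W$ together with its $\phi$-annihilator, hence by a one-parameter subgroup coming from the diagonal $GL(W)$. This is the Kronecker-quiver counterpart of the classical fact that a principal $\mathrm{Sp}$-bundle is semistable exactly when its underlying vector bundle is; carrying out this Hilbert--Mumford reduction, together with the bookkeeping of $S$-equivalence classes along the $W_\Theta$-boundary, is where I expect the real difficulty to lie.

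Finally, a bijective $\psi$ is promoted to an isomorphism in the usual way: both sides are normal projective varieties (the GIT quotient automatically, and $Z$ by its construction as the output of a run of the MMP), and $\psi$ is proper and birational, since over the dense locus of rank-$(d-1)$ skew forms both sides parametrise the complexes $0\to\mathcal{O}((-d-3)/2)[1]\to E\to\mathcal{O}((d-3)/2)\to 0$. Zariski's main theorem then forces $\psi^{-1}$ to be regular. Alternatively one compares Zariski tangent spaces at a generic point of $Z$, matching $\Ext(1,E,E)$ on the $N(3,d,d)$ side with the $GL(W)$-invariant normal space to the generic skew orbit.
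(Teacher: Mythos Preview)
There is no proof in the paper to compare against: the statement is explicitly a \emph{conjecture}, attributed to Aaron Bertram in a personal communication, and the paper offers no argument beyond the motivating remarks that precede it (the skew-orbit description for $d=3,5$, the Schur-lemma observation that the map from stable skew $GL(W)\times GL(W^*)$-orbits to $\wedge^2W\otimes V/\!/GL(W)$ is injective, and the explicit semistable boundary matrices). You are aware of this, since you title your contribution ``Towards a proof'' and flag the surjectivity of $\psi$ as ``the reason this is stated only as a conjecture.''

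Your outline is a reasonable research programme and is consistent with the evidence the paper records, but several of the steps you treat as routine are not obviously so. First, the claim that every duality-fixed stable complex in the last model admits a skew presentation is only argued in the paper for the generic point (rank drops by one everywhere); you assert this for all of $E_0^-$, but nothing in the paper rules out more degenerate rank behaviour away from the generic locus, and the paper itself hedges with ``the general skew-map.'' Second, the identification of the correct character $\chi$ and the matching of King stability on $\wedge^2W\otimes V$ with the restriction of $N(3,d,d)$-stability is genuinely nontrivial: the paper only shows the forward implication (stable in $N(3,d,d)$ implies $GL(W)$-stable for the diagonal action), and your Hilbert--Mumford reduction via self-dual filtrations is a plausible but unverified heuristic. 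Third, normality of $Z$ is asserted but not established: $Z$ is the closure of the image of a subvariety under a sequence of flips and a divisorial contraction, and there is no general reason the output of such a restricted MMP is normal. In short, your sketch correctly isolates the main obstacles, but it does not resolve them, and the paper does not either.
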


\subsection{Odd Veronese embeddings}

As mentioned at the beginning of Section 5, one could as well study flips for secant varieties of odd Veronese embeddings by studying the Bridgeland wall crossing for the Gieseker moduli space $M_H(0,d,-d)$ for $d$ even, which contains a locus parametrizing curves of degree $d$. As before, we want to run the MMP on $M_H(u)$ for $u=(0,d,-d)$. 

Let $E$ be a Gieseker semistable sheaf with $ch(E)=u$. The center and the radius of a wall produced by a Bridgeland destabilizing subobject $A\hookrightarrow E$ are respectively
\begin{align*}
C&=\left(\frac{ch_2(E)}{ch_1(E)},0\right)=(-1,0),\\
R&=\sqrt{\left(\frac{ch_2(E)}{ch_1(E)}\right)^2+2\frac{ch_2(A)}{ch_0(A)}-2\frac{ch_2(E)ch_1(A)}{ch_0(A)ch_1(E)}}
&=\sqrt{1+2\frac{ch_1(A)+ch_2(A)}{ch_0(A)}}.
\end{align*}
Thus the category to be considered is $\mathcal{A}_{-1}$. Let $E$ be a Gieseker semistable sheaf with $ch(E)=u$, then
$$
\chi(E)=\dim H^0(E)-\dim H^1(E)=ch_2(E)+\frac{3}{2}ch_1(E)=-d+\frac{3d}{2}=\frac{d}{2}>0.
$$
Thus there are nonzero maps $\mathcal{O}\rightarrow E$. If $K$ is the kernel in $\mathcal{A}_{-1}$ of such map, then there is a diagram in $\mathcal{A}_{-1}$
$$
\begin{diagram}
\node{K}\arrow{e,J}\node{\mathcal{O}}\arrow{se,A}\arrow{e}\node{E}\\
\node{}\node{}\node{F}\arrow{n,J}
\end{diagram}
$$
Because $\mathcal{O}$ and $E$ are sheaves so are $K$ and $F$. Thus $K$ must be a subsheaf of $\mathcal{O}$ implying that $F$ has rank 0, and therefore is a subsheaf of $E$. Now, as mentioned before $\mathcal{O}$ is $\sigma_{s,t}$-stable for all $s,t$ ($t>0$), and so
$$
\mu_{s,t}(K)<\mu_{s,t}(\mathcal{O})<\mu_{s,t}(F)\leq \mu_{s,t}(E)
$$ 
unless $K$ is trivial. Since such inequality does not hold for all $s,t$, then $\mathcal{O}$ is a subobject of $E$ in $\mathcal{A}_{-1}$. This 
proves that we have a collapsing wall $W_{ch(\mathcal{O}),u}$, which has radius
$$
R=\sqrt{1+2\frac{ch_1(\mathcal{O})+ch_2(\mathcal{O})}{ch_0(\mathcal{O})}}=1.
$$
Crossing this wall collapses (at least) the open set of Gieseker semistable sheaves that are Bridgeland semistable along $W_{ch(\mathcal{O}),u}$.

There is also a divisorial contraction produced by the tangent sheaf $T_{\P^2}(-1)$. To see this, consider the rational map 
$$
\xymatrix{M_H(0,d,-d) \ar@{-->}[r] & M_H(0,2d,-3d)},\ \ \mathcal{F}\mapsto \mathcal{F}\otimes \Omega^1(1).
$$ 
As in the case when $d$ is odd, the moduli space $M_H(0,2d,-3d)$ has a natural divisor $\Theta$ (sheaves with a section), and the image of $M_H(0,d,-d)$ is not contained in $\Theta$. By pulling back $\Theta$ we obtain a divisor $\Theta'$ consisting of semistable sheaves $\mathcal{F}$ such that $\mathcal{F}\otimes\Omega^1(1)$ has a section. Since 
$$
\mbox{Hom}(\mathcal{O},\mathcal{F}\otimes\Omega^1(1))=\mbox{Hom}(T_{\P^2}(-1),\mathcal{F})
$$ 
then the divisor $\Theta'$ is contracted when crossing the wall corresponding to the destabilizing object $T_{\P^2}(-1)$. The wall $W_{ch(T_{\P^2}(-1)),u}$ has radius
$$
R=\sqrt{1+2\frac{ch_1(T_{\P^2}(-1))+ch_2(T_{\P^2}(-1))}{ch_0(T_{\P^2}(-1))}}=\sqrt{1+2\frac{1-1/2}{2}}=\sqrt{\frac{3}{2}}.
$$
\begin{remark}
To analyze the last birational model one has to use the triad $\mathcal{O}(-1),\Omega^1(1),\mathcal{O}$ instead of $\mathcal{O}(-2),\mathcal{O}(-1),\mathcal{O}$ in the construction of the quiver moduli. This gives a construction of the last birational model as the GIT quotient
$$
\Hom(V\otimes \mathcal{O}(-2),W\otimes \mathcal{O})//GL(V)\times GL(W),
$$
where $V$ and $W$ are complex vector spaces of dimension $d/2$.
\end{remark}
From the inequalities in the previous chapter, one obtains that the invariants of a rank 1 destabilizing subobject producing a wall corresponding to a flip must satisfy
$$
\frac{3}{2}< 1+2ch_1(A)+2ch_2(A)\leq \frac{d^2}{4}\ \ \text{and}\ \ R\leq ch_1(A)+1\leq d-R,
$$ 
where 
$$
R=\sqrt{1+2ch_1(A)+2ch_2(A)}
$$ 
is the radius of the corresponding wall. 

Notice that $ch_1(A)=(d-2)/2$ always satisfies the second inequality. Trying to mimic what we did for the $d$ odd case, we would like $ch_1(A)=(d-2)/2$ to be the only solution to the second inequality, i.e., looking for destabilizing objects of the form
$$
A=\mathcal{I}_Z\left(\frac{d-2}{2}\right),\ \ \text{length}(Z)=\ell.
$$
If this is the only solution to the second inequality, it means that $(d-R)-R<1$, equivalently we must have
$$
\frac{(d-1)^2}{4}<1+2ch_1(A)+2ch_2(A).
$$ 
After some elementary computations we obtain
$$
0\leq \ell \leq\frac{d-2}{4}.
$$
\begin{remark}
The range $0\leq \ell \leq\frac{d-2}{4}$ is only to assure that the exceptional locus for the flips is irreducible, and to obtain good wall-crossing since at these walls the strictly semistable objects being destabilized will have Jordan-H\"older filtrations of length 1. 
\end{remark}
The exceptional locus for the first flip corresponds to those sheaves fitting into an exact sequence in $\mathcal{A}_{-1}$ of the form
$$
0\rightarrow \mathcal{O}((d-2)/2)\rightarrow E \rightarrow \mathcal{O}((-d-2)/2)[1]\rightarrow 0.
$$
The exceptional introduced when crossing this wall is 
$$
\P(\mbox{Ext}^{1}(\mathcal{O}((d-2)/2),\mathcal{O}((-d-2)/2)[1]))=\P(H^0(\mathcal{O}(d-3)))^{\vee}.
$$
Propositions 5.2, 5.3, 6.1, Lemma 6.3, Propositions 6.4, 6.5, and Lemma 6.6 hold in this setting with identical proofs after replacing $(d-3)/2$ in the odd case by $(d-2)/2$ in the even case, and noticing that the correct duality automorphism is $(\cdot)^{D}\otimes \mathcal{O}(1)$ instead of $(\cdot)^D$. 

We conclude this chapter with the following theorem, which is a corollary of our construction:
\begin{theorem}\label{main3}
Let $d\geq 5$ be an integer and let $\nu_{d-3}\colon \P^2\rightarrow \P(H^0(\mathcal{O}(d-3)))^{\vee}=\P^N$ be $(d-3)$-uple embedding. There exists a sequence of flips 
\footnotesize{
$$
\begin{diagram}
\node{\bl_{\nu_{d-3}(\P^2)}\P^N}\arrow{se}\arrow{s}\arrow[2]{e,t,..}{f_1}\node{}\node{M_1}\arrow[2]{e,t,..}{f_2}\arrow{sw}\arrow{se}\node{}\node{M_2}\arrow{sw}\arrow{se}\node{\cdots}\node{M_{k}}\arrow{sw}\\
\node{N_1\supset\P^N\hspace{1cm}}\node{M_1'}\node{}\node{M_2'}\node{}\node{\cdots}\node{}
\end{diagram} 
$$}
\normalsize
where $k=(d-1)/2$ for $d$ odd, and $k=\lfloor{(d-2)/4\rfloor}$ for $d$ even, the exceptional locus of $f_i$ is the strict transform of $Sec^{i}(\nu_{d-3}(\P^2))$, and $N_1$ is the first birational model appearing when running the MMP for $M_H(0,d,-3d/2)$ or $M_H(0,d,-d)$ depending on whether $d$ is odd or even respectively.
\end{theorem}
\begin{remark} As we have seen, this sequence of flips is indeed longer but the exceptional loci after the first $k$ flips become more complicated since after this point strictly semistable objects have at least three Jordan-H\"older factors.
\end{remark}


\section*{Acknowledgments}
The present work is part of the author's Ph.D. thesis at the University of Utah. He would like to thank his advisor, Professor Aaron Bertram, for his vision, and constant encouragement and patience throughout the years of graduate school. The author would like to express his gratitude to the referee(s) for so many valuable comments that have made the present work much more readable.








\end{document}